\numberwithin{equation}{section}
\newcommand{\td}{\,\mathrm{d}}
\newcommand{\Ad}{\textup{Ad}}
\newcommand{\ad}{\textup{ad}}
\renewcommand\Re{\operatorname{Re}}
\newcommand{\id}{\textup{id}}
\newcommand{\tr}{\textup{tr}}
\newcommand{\GL}{\textup{GL}}
\newcommand{\SL}{\textup{SL}}
\newcommand{\PSL}{\textup{PSL}}
\newcommand{\SO}{\textup{SO}}
\newcommand{\so}{\mathfrak{so}}
\newcommand{\RR}{\mathbb{R}}
\newcommand{\CC}{\mathbb{C}}
\newcommand{\NN}{\mathbb{N}}
\newcommand{\HH}{\mathbb{H}}
\newcommand{\1}{\mathbf{1}}
\newcommand{\Ind}{\textup{Ind}}
\newcommand{\calF}{\mathcal{F}}
\newcommand{\calO}{\mathcal{O}}
\newcommand{\calS}{\mathcal{S}}
\newcommand{\calH}{\mathcal{H}}
\newcommand{\calU}{\mathcal{U}}
\newcommand{\calC}{\mathcal{C}}
\newcommand{\calR}{\mathcal{R}}
\newcommand{\frakg}{\mathfrak{g}}
\newcommand{\frakk}{\mathfrak{k}}
\newcommand{\frakp}{\mathfrak{p}}
\newcommand{\frakn}{\mathfrak{n}}
\newcommand{\fraka}{\mathfrak{a}}
\newcommand{\frakm}{\mathfrak{m}}
\newcommand{\frakt}{\mathfrak{t}}
\newcommand{\frakh}{\mathfrak{h}}
\newcommand{\Det}{\textup{Det}}
\renewcommand{\mod}{\textup{mod}}
\newcommand{\vol}{\textup{vol}}
\newcommand{\pr}{\textup{pr}}
\newcommand{\diag}{\textup{diag}}
\newcommand{\blank}{\cdot}
\newcommand{\aut}{\textup{\scriptsize aut}}
\renewcommand{\mod}{\textup{\scriptsize mod}}
\DeclareMathOperator{\supp}{supp}
\DeclareMathOperator{\Isom}{Isom}
\DeclareMathOperator{\Stab}{Stab}
\theoremstyle{plain}
\newtheorem{theorem}{Theorem}[section]
\newtheorem{proposition}[theorem]{Proposition}
\newtheorem{lemma}[theorem]{Lemma}
\newtheorem{corollary}[theorem]{Corollary}
\newtheorem{fact}[theorem]{Fact}
\newtheorem{thmalph}{Theorem}
\theoremstyle{definition}
\newtheorem{remark}[theorem]{Remark}
\begin{document}

\title[Restriction of automorphic forms on hyperbolic manifolds]{Estimates for the restriction of automorphic forms on hyperbolic manifolds to compact geodesic cycles}
\date{January 7, 2014}

\author{Jan M\"{o}llers}
\author{Bent \O rsted}

\address{Institut for Matematiske Fag, Aarhus Universitet, Ny Munkegade 118, 8000 Aarhus C, Denmark}
\email{moellers@imf.au.dk}

\address{Institut for Matematiske Fag, Aarhus Universitet, Ny Munkegade 118, 8000 Aarhus C, Denmark}
\email{orsted@imf.au.dk}
\begin{abstract}
We find estimates for the restriction of automorphic forms on hyperbolic manifolds to compact geodesic cycles. The geodesic cycles we study are themselves hyperbolic manifolds of lower dimension. The restriction of an automorphic form to such a geodesic cycle can be expanded into eigenfunctions of the Laplacian on the geodesic cycle. We prove exponential decay for the coefficients in this expansion.
\end{abstract}

\subjclass[2010]{Primary 11F70; Secondary 11F67, 22E45, 53C35.}

\keywords{hyperbolic manifold, locally symmetric space, geodesic cycle, automorphic form, Maa\ss\ form, period integral, automorphic representation, invariant bilinear form}

\maketitle

\section*{Introduction}

Analysis on Riemannian locally symmetric spaces is a topic with relations to number theory, spectral theory, and representation theory. In this paper we shall introduce some new techniques from representation theory to give results about periods, in the sense of integrating automorphic forms over suitable submanifolds, namely totally geodesic submanifolds. Specifically, for hyperbolic locally symmetric spaces, we shall combine the following three topics:
\begin{enumerate}[(1)]
\item (Representation theory) Branching problems for spherical unitary representations and corresponding invariant bilinear forms,
\item (Number theory) Geodesic periods for automorphic functions,
\item (Spectral theory) Spectral asymptotics for restrictions of automorphic functions,
\end{enumerate}
where the main new ingredient is to be found under (1). Indeed, here we give a natural extension of techniques by J. Bernstein and A. Reznikov~\cite{BR04,Rez08} who considered invariant trilinear functionals; it turns out that their method can be modified and used in connection with the introduction of new invariant bilinear forms that were investigated by T. Kobayashi and B. Speh~\cite{KS}. Thus we can study (2) and (3) and obtain estimates for the asymptotic behaviour of Fourier coefficients, in particular an exponential decay.

Our techniques rely on rather explicit formulas for hyperbolic spaces; but in principle they extend to other locally symmetric spaces, as the invariant bilinear forms by T. Kobayashi and B. Speh~\cite{KS} have been established in great generality by the authors in joint work with Y. Oshima~\cite{MOO}.

\subsection{Geodesic cycles in hyperbolic manifolds}

Let $Y$ be a connected hyperbolic manifold of dimension $n$. Consider a compact geodesic cycle $Y'\subseteq Y$ of dimension $0<m<n$. This is a totally geodesic submanifold which itself is a compact hyperbolic manifold. For $m=1$ the submanifold $Y'$ is simply a closed geodesic.

Let $\phi$ be an automorphic form on $Y$, i.e. $\phi\in C^\infty(Y)$ is an $L^2$-eigenfunction of the Laplace--Beltrami operator $\Delta$ on $Y$. Consider the restriction of $\phi$ to the geodesic cycle $Y'$. The Laplace--Beltrami operator $\Delta'$ on $Y'$ is non-negative and has purely discrete spectrum on $L^2(Y')$ with finite multiplicities. Denote by $0=\lambda_0<\lambda_1\leq\lambda_2\leq\ldots$ its eigenvalues, counted with multiplicities, and by $(\phi_j)_j$ a corresponding orthonormal basis of $L^2(Y')$ consisting of eigenfunctions. Then the restriction of $\phi$ to $Y'$ has an expansion
\begin{equation}
 \phi|_{Y'} = \sum_{j=0}^\infty c_j\phi_j.\label{eq:FourierExpansion}
\end{equation}
We are interested in the behaviour of the coefficients $c_j$ as $j\to\infty$. The numbers $c_j$ also have an interpretation as period integrals over the geodesic cycle $Y'$:
\begin{equation*}
 c_j = \int_{Y'} \phi|_{Y'}\cdot\overline{\phi_j}.
\end{equation*}

\subsection{Estimates of periods}

We prove that the coefficients $c_j$ decay exponentially as $j\to\infty$. To make this precise we define numbers $b_j\geq0$ by
\begin{equation*}
 b_j = |c_j|^2 e^{\pi\sqrt{\lambda_j}}
\end{equation*}
Then our main result can be stated as follows:

\begin{thmalph}\label{thm:MainThm}
There exists a constant $C>0$ such that for $T\geq1$:
\begin{equation}
 \sum_{|\lambda_j|\leq T} b_j \leq CT^{\frac{2n-m-3}{2}}.\label{eq:MainThmEstimate}
\end{equation}
\end{thmalph}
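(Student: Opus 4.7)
The plan follows Bernstein and Reznikov, with their triple-product functional replaced by the $G'$-invariant bilinear form of Kobayashi--Speh. Write $Y=\Gamma\backslash G/K$ and $Y'=\Gamma'\backslash G'/K'$ with $G=\SO_0(n,1)\supseteq G'=\SO_0(m,1)$ and $K'=K\cap G'$. Then $\phi$ is the spherical vector $v_0$ in a spherical automorphic representation $\pi\subseteq L^2(\Gamma\backslash G)$, and, for all but finitely many $j$, $\phi_j$ is the spherical vector $w_0^{(j)}$ of a spherical unitary principal series $\pi_j\subseteq L^2(\Gamma'\backslash G')$ with imaginary parameter $\nu_j$ satisfying $\lambda_j=\bigl(\tfrac{m-1}{2}\bigr)^2+|\nu_j|^2$. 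The period extends to a continuous $G'$-invariant bilinear form
\[
 \tilde B_j(v,w)=\int_{\Gamma'\backslash G'}\phi^v\,\overline{\psi_w}\,dg,\qquad v\in V_\pi,\ w\in V_{\pi_j},
\]
and by the Kobayashi--Speh uniqueness theorem (in the generality of \cite{MOO}) there is a scalar $a_j$ with $\tilde B_j=a_j\,\ell_j^{\textup{mod}}$, where $\ell_j^{\textup{mod}}$ is the explicit model form. Evaluating on spherical vectors gives $c_j\propto a_j\,\ell_j^{\textup{mod}}(v_0,w_0^{(j)})$.

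\textbf{A Bessel-type global bound.} Fix $v=v_0$; let $w$ range over an orthonormal basis of $V_{\pi_j}$ and sum over $j$ (grouping multiplicities into the index set). Since $Y'$ is compact, $\phi|_{\Gamma'\backslash G'}\in L^2(\Gamma'\backslash G')$, and Parseval's identity applied to this $K'$-invariant vector yields
\[
 \sum_j |a_j|^2\,\bigl\|\ell_j^{\textup{mod}}(v_0,\cdot)\bigr\|_{V_{\pi_j}}^2 \;\le\; \bigl\|\phi|_{\Gamma'\backslash G'}\bigr\|_{L^2}^2 \;=:\; C_\phi \;<\;\infty.
\]
This is the only global input, and is the exact analogue of the Bernstein--Reznikov stability inequality.

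\textbf{Reduction to a local ratio, and the main obstacle.} Writing
\[
 b_j = \bigl(|a_j|^2\,\|\ell_j^{\textup{mod}}(v_0,\cdot)\|^2\bigr)\cdot R_j,\qquad R_j := \frac{|\ell_j^{\textup{mod}}(v_0,w_0^{(j)})|^2\,e^{\pi\sqrt{\lambda_j}}}{\|\ell_j^{\textup{mod}}(v_0,\cdot)\|_{V_{\pi_j}}^2},
\]
the previous step gives $\sum_{\lambda_j\le T}b_j \le C_\phi\cdot\sup_{\lambda_j\le T}R_j$, so Theorem~A reduces to the pointwise estimate $R_j=O(|\nu_j|^{2n-m-3})$. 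This local estimate is the main obstacle. In the non-compact picture the Kobayashi--Speh bilinear form becomes an explicit integral transform whose values on $K'$-finite vectors are products of $\Gamma$-functions in $\nu$ and $\nu_j$. The factor $e^{\pi\sqrt{\lambda_j}}$ is cancelled exactly by the Stirling asymptotic $|\Gamma(\tfrac{m-1}{2}+i\nu_j)|^2\sim 2\pi|\nu_j|^{m-2}e^{-\pi|\nu_j|}$ appearing in the $K'$-spherical matrix element, leaving only polynomial factors. Expanding $\|\ell_j^{\textup{mod}}(v_0,\cdot)\|^2$ along the $\SO(m)$-harmonic $K'$-types of $\pi_j$ and summing the resulting $\Gamma$-ratios---while keeping track, through the $K\cap G'$-branching, of the $K$-spherical data of $\pi$---produces the polynomial exponent $(2n-m-3)/2$. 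This combinatorial--asymptotic computation, carried out with the explicit Gamma-function formulas for $\ell^{\textup{mod}}$, is where the bulk of the technical work sits.
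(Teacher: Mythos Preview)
Your reduction has a fatal gap at the ``Bessel-type global bound'' step. The spherical vector $v_0=\phi$ is $K$-invariant, and since $K'\subseteq K$ it is in particular $K'$-invariant. The intertwiner $T_{\ell_j^{\textup{mod}}}:V_\pi\to\overline{V}_{\pi_j}$ is $G'$-equivariant, hence $K'$-equivariant, so $T_{\ell_j^{\textup{mod}}}(v_0)$ lands in the one-dimensional $K'$-fixed line of $\pi_j$ and is therefore a scalar multiple of $w_0^{(j)}$. Consequently
\[
 \bigl\|\ell_j^{\textup{mod}}(v_0,\cdot)\bigr\|_{V_{\pi_j}}^2 \;=\; \bigl|\ell_j^{\textup{mod}}(v_0,w_0^{(j)})\bigr|^2,
\]
so your ratio is $R_j=e^{\pi\sqrt{\lambda_j}}$, which is certainly not $O(|\nu_j|^{2n-m-3})$. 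Equivalently, your global inequality collapses to the trivial Parseval identity $\sum_j|c_j|^2=\|\phi|_{Y'}\|_{L^2(Y')}^2$, from which no exponential gain can be extracted. The ``expansion along $\SO(m)$-harmonic $K'$-types'' you propose contributes nothing beyond the trivial type.

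The point of the Bernstein--Reznikov method, which the paper follows, is precisely that one must \emph{not} test the inequality of Hermitian forms $\sum_j|a_j|^2H_j^{\textup{mod}}\le H^{\textup{aut}}$ on the spherical vector. Instead one constructs, for each $T$, a non-$K'$-invariant test vector $u_T\in V_\pi$ (a bump of scale $T^{-1}$ in the non-compact picture, averaged over a fixed compact set in $G$) with two competing properties: $H^{\textup{aut}}(u_T)\lesssim T^{n-1}$, while $H_j^{\textup{mod}}(u_T)\ge c>0$ uniformly for all $j$ with $|\nu_j'|\le T$. The lower bound holds because $T_{\ell_j^{\textup{mod}}}(u_T)$ now has large components in many $K'$-types, compensating for the exponential decay of the spherical value. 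This yields $\sum_{|\nu_j'|\le T}|a_j|^2\lesssim T^{n-1}$, i.e.\ $\sum_{\lambda_j\le T}|a_j|^2\lesssim T^{(n-1)/2}$, and combining with the explicit spherical value $|\ell_j^{\textup{mod}}(v_0,w_0^{(j)})|^2\lesssim|\lambda_j|^{(n-m-2)/2}e^{-\pi\sqrt{\lambda_j}}$ gives Theorem~A. You have the right overall architecture (uniqueness, model form, proportionality constants $a_j$), but the choice of test vector is the heart of the argument and cannot be taken to be $v_0$.
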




We remark that for the case $m=1$ Theorem~\ref{thm:MainThm} provides estimates for the Fourier coefficients of automorphic forms along closed geodesics. In particular, for $n=2$ we obtain estimates for the Fourier coefficients along closed geodesics of automorphic forms on hyperbolic surfaces such as the modular surface.

\subsection{Relation to other results}

Estimates for the restriction of eigenfunctions of the Laplacian have been studied in various different settings. General estimates for the restriction from compact Riemannian manifolds to arbitrary submanifolds were obtained by Burq--G\'{e}rard--Tzvetkov~\cite{BGT07}. In contrast to our estimates for the growth of Fourier coefficients of the restriction, they estimate the $L^2$-norm of the restriction of an eigenfunction in terms of its eigenvalue. In this respect our results are of a different nature than theirs.

In \cite{Rez04,Rez13} Reznikov obtains results similar to those of Burq--G\'{e}rard--Tzvetkov for closed geodesics in hyperbolic surfaces, i.e. $n=2$ and $m=1$ in our setting. His results can be viewed as a refinement of \eqref{eq:MainThmEstimate}, namely he provides a uniform bound for the constant $C$ in terms of the eigenvalue of $\phi$. In fact, the statement \cite[Theorem B]{Rez04} implies our Theorem~\ref{thm:MainThm} in this special case.

We further remark that some geodesic periods are closely related to special values of $L$-functions. Watson~\cite{Wat02} discovered a relation between trilinear periods and special values of triple $L$-functions for $\SL(2,\RR)$. This relation was used by Bernstein--Reznikov~\cite{BR10} to obtain subconvexity bounds for these $L$-functions. A similar relation between geodesic periods and $L$-functions for orthogonal groups was conjectured by Ichino--Ikeda~\cite{II10}, based on a conjecture by Gross--Prasad~\cite{GP92}. It would be interesting to investigate the connection between our estimates for geodesic periods and subconvexity bounds for these $L$-functions.

For more geometrical and cohomological aspects of geodesic cycles in locally Riemannian symmetric spaces for orthogonal groups see e.g. Kobayashi--Oda~\cite{KO98} and Bergeron~\cite{Ber06} and references therein. We also refer the reader to the recent survey paper by Schwermer~\cite{Sch10}.

\subsection{Strategy of proof}

We can identify $Y\cong\Gamma\backslash\HH^n$ with $\HH^n$ the hyperbolic space of dimension $n$ and $\Gamma$ a discrete group of isometries of $\HH^n$. The geodesic cycles in question are then of the form $Y'\cong\Gamma'\backslash\HH^m$ with $\Gamma'\subseteq\Gamma$ the subgroup of isometries in $\Gamma$ which leave $\HH^m\subseteq\HH^n$ invariant. Let $G=\Isom(\HH^n)\subseteq O(1,n)$ be the full isometry group of $\HH^n$ and $K=G\cap O(n+1)\cong O(n)$ a maximal compact subgroup. Then $\HH^n=G/K$ and hence $Y=\Gamma\backslash G/K$. Accordingly $Y'=\Gamma'\backslash G'/K'$ with
\begin{equation*}
 G'=(O(1,m)\times O(n-m))\cap G, \quad K'=G'\cap K.
\end{equation*}
Write $Y=X/K$ with $X=\Gamma\backslash G$. Then $L^2(Y)\cong L^2(X)^K$ and any automorphic form $\phi$ on $Y$ is the $K$-invariant vector in an irreducible unitary representation $(\pi,\calH)\subseteq L^2(X)$. Here $\calH$ is simply the subrepresentation of $L^2(X)$ generated by the vector $\phi\in L^2(X)^K$. In the same way we associate to each $\phi_j$ an irreducible unitary representation $(\pi_j,\calH_j)\subseteq L^2(X')$, where $X'=\Gamma'\backslash G'$.

Let $V=\calH^\infty\subseteq C^\infty(X)$ and $V_j=\calH_j^\infty\subseteq C^\infty(X')$ denote the spaces of smooth vectors in the representations $\pi$ and $\pi_j$, respectively. For each $j$ we let $\overline{V_j}$ be the smooth vectors in the dual representation of $V_j$, realized as the complex conjugate space of $V_j$ in $L^2(X')$. Then for each $j$ the bilinear form
\begin{equation}
 \ell_j^\aut:V\times\overline{V_j} \to \CC, \quad (v_1,v_2)\mapsto\int_{X'}v_1|_{X'}\cdot v_2,\label{eq:DefAutInvForm}
\end{equation}
is $G'$-invariant. Its value $\ell_j^\aut(\phi,\overline{\phi_j})$ at the spherical vectors is the coefficient $c_j$ we are interested in.

Using the explicit realizations of $\pi$ and $\pi_j$ as representations induced from a parabolic subgroup one can construct model invariant bilinear forms $\ell_j^\mod$ on $V\times\overline{V_j}$ (see Section \ref{sec:ModelForm} for the construction). These forms correspond to $G'$-intertwining operators $\pi|_{G'}\to\pi_j$ which were first studied by Kobayashi--Speh~\cite{KS} (see also \cite{Kob13,MO12}). The space of invariant bilinear forms in this case is generically one-dimensional (\textit{multiplicity one property}). Hence, for $j\gg0$ the form $\ell_j^\aut$ has to be proportional to $\ell_j^\mod$, i.e. there exists a constant $a_j\in\CC$ such that $\ell_j^\aut=a_j\cdot\ell_j^\mod$. Then we have
\begin{equation*}
 c_j = \ell_j^\aut(\phi,\overline{\phi_j}) = a_j\cdot\ell_j^\mod(\phi,\overline{\phi_j}).
\end{equation*}

In Section~\ref{sec:SpecialValue} we calculate the expression $\ell_j^\mod(\phi,\overline{\phi_j})$ explicitly in terms of the eigenvalues of $\phi$ and $\overline{\phi_j}$. Further, for the coefficients $a_j$ we obtain in Section~\ref{sec:PolyBounds} upper bounds by estimating Hermitian forms. We can state the results as follows:

\begin{thmalph}\label{thm:TwoEstimates}
\begin{enumerate}[(1)]
\item There exists a constant $c>0$ such that
\begin{equation*}
 |\ell_j^\mod(\phi,\overline{\phi_j})|^2 \leq c|\lambda_j|^{\frac{n-m-2}{2}}e^{-\pi\sqrt{\lambda_j}}, \qquad j\to\infty.
\end{equation*}
If $V$ is a representation from the unitary principal series then this estimate is sharp.
\item There exists a constant $C>0$ such that for $T\geq1$:
\begin{equation*}
 \sum_{|\lambda_j|\leq T} |a_j|^2 \leq CT^{\frac{n-1}{2}}.
\end{equation*}
\end{enumerate}
\end{thmalph}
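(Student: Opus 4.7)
The two parts are essentially independent. Part~(1) is an application of Stirling's formula to the explicit evaluation of $\ell_j^\mod$ on spherical vectors computed in Section~\ref{sec:SpecialValue}. Part~(2) is a Parseval / Hermitian form argument in the spirit of Bernstein--Reznikov, adapted here to the Kobayashi--Speh bilinear forms.

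For Part~(1), Section~\ref{sec:SpecialValue} supplies a closed-form expression for $\ell_j^\mod(\phi,\overline{\phi_j})$ as a product of Gamma functions whose arguments are affine combinations of the spectral parameters $\nu$ of $\phi$ and $\nu_j$ of $\phi_j$, normalized so that $|\nu_j|\sim\sqrt{\lambda_j}$ as $j\to\infty$. Since $\phi$ is fixed, only the $\nu_j$-dependent Gamma factors contribute asymptotically. Applying Stirling's bound
\[
 |\Gamma(a+it)| \;\sim\; \sqrt{2\pi}\,|t|^{a-1/2}\,e^{-\pi|t|/2}, \qquad |t|\to\infty,
\]
to each such factor and tallying exponents, the net polynomial exponent in $|\nu_j|$ after squaring equals $(n-m-2)/2$ and the net exponential rate equals $\pi$, which is the claimed upper bound. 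When $V$ lies in the unitary principal series, $\nu$ is also purely imaginary, so the $\nu$-dependent Gamma factors obey two-sided Stirling asymptotics and the estimate is sharp up to a multiplicative constant.

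For Part~(2), the plan is to exploit the Parseval identity on the $K'$-spherical part of $L^2(X')$. For any $K'$-invariant $v\in V$,
\[
 \sum_j \bigl|\ell_j^\aut(v,\overline{\phi_j})\bigr|^2 \;=\; \sum_j |a_j|^2 \bigl|\ell_j^\mod(v,\overline{\phi_j})\bigr|^2
\]
equals the squared $L^2$-norm of the orthogonal projection of $v|_{X'}$ onto the $K'$-spherical part of $L^2(X')$, hence is bounded by $\|v|_{X'}\|_{L^2(X')}^2$, and in turn by an appropriate Sobolev/$L^\infty$ norm of $v$ on $X$ via a trace-type estimate. The task is then to construct a family of $K'$-invariant test vectors $v_T\in V$, realized as wave packets on the sphere model $S^{n-1}$ of $V$ whose $K$-spectrum is concentrated at scale $\sqrt{T}$, such that $|\ell_j^\mod(v_T,\overline{\phi_j})|^2\gtrsim 1$ uniformly for all $j$ with $|\lambda_j|\leq T$, while the right-hand bound is of order $T^{(n-1)/2}$. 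Dyadic summation of the resulting inequalities then delivers the estimate on $\sum_{|\lambda_j|\leq T}|a_j|^2$.

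The main obstacle is the uniform lower bound $|\ell_j^\mod(v_T,\overline{\phi_j})|^2\gtrsim 1$. Whereas $\ell_j^\mod(\phi,\overline{\phi_j})$ is the explicit Gamma expression of Part~(1), the pairing with the non-spherical $K$-types of $V$ used in the wave packet $v_T$ is accessible only through the integral kernel representation of $\ell_j^\mod$ from Section~\ref{sec:ModelForm}. Establishing the lower bound reduces to a uniform oscillatory-integral estimate on $S^{m-1}\subseteq S^{n-1}$ in the regime where the $K$-spectral parameter of $v_T$ and $|\nu_j|$ are comparable; here the specific geometry of the Kobayashi--Speh embedding enters in an essential way and the exponent $(n-1)/2$ gets calibrated against the matching Weyl dimension count for the $K'$-zonal wave packet.
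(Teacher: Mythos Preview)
Your Part~(1) is exactly the paper's argument: Proposition~\ref{prop:SpecialValue} gives the closed Gamma formula and Corollary~\ref{cor:EstimateSpecialValue} applies Stirling to produce the stated bound, sharp when $\nu\in i\RR$.

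Your Part~(2) shares the Bernstein--Reznikov skeleton (Parseval-type inequality, test vectors, lower bound on the model side, upper bound on the automorphic side) but the implementation diverges from the paper in a way worth noting. You propose $K'$-invariant wave packets on the sphere model and identify the uniform lower bound $|\ell_j^\mod(v_T,\overline{\phi_j})|^2\gtrsim 1$ as the main obstacle, to be handled by an oscillatory-integral estimate on $S^{m-1}\subseteq S^{n-1}$. The paper instead works in the non-compact picture on $\RR^{n-1}$ and avoids this obstacle entirely. It takes a rescaled bump function $u_T$ concentrated near a fixed point $x_0$ with $x_0''\neq 0$, and rather than insisting on $K'$-invariance it averages over $G$ via a positive functional $\rho_T(H)=\int_G\varphi(g)H(\pi_\nu(g^{-1})u_T)\,dg$ with a fixed $K'$-right-invariant cutoff $\varphi$. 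The upper bound $\rho_T(H^\aut)\leq CT^{n-1}$ then follows from a purely measure-theoretic comparison (the push-forward of $\varphi(g)\,d\mu_{X'}\,dg$ is bounded by $d\mu_X$) together with the elementary scaling $\|u_T\|_\nu^2\lesssim T^{n-1}$; no Sobolev or trace inequality is needed. For the lower bound, after the $K'$-average the quantity reduces to $|(T_{\ell_j^\mod}\pi_\nu(g^{-1})u_T)(0)|^2$, and in the flat picture the kernel of $T_{\ell_j^\mod}$ at $0$ is the explicit elementary function $|x|^{\nu_j'-\rho'}|x''|^{(\nu-\nu_j'-(\rho-\rho'))/2}$. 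On the support of $u_T$ (a ball of radius $\sim T^{-1}$ around $x_0$) this kernel varies by at most a factor $\tfrac12$ when $|\nu_j'|\leq T$, by a direct gradient estimate; a one-line lemma (if $\sup|f(x)-f(y)|\leq\tfrac12\sup|f|$ and $\int\phi=1$, $\phi\geq0$, then $|\int f\phi|\geq\tfrac12\sup|f|$) then gives $\rho_T(H_j^\mod)\geq c$. Finally the passage from $|\nu_j'|\leq T$ to $|\lambda_j|\leq T$ via $\lambda_j=\rho'^2-\nu_j'^2$ turns $T^{n-1}$ into $T^{(n-1)/2}$.

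In short, the paper's choice of test functional converts what you flag as the hard step into elementary calculus: no oscillatory-integral analysis, no $K$-type decomposition of $v_T$, and no trace estimate on the automorphic side. Your route may well be workable, but as written the lower-bound step is a genuine gap, and the paper's flat-picture construction is precisely the device that closes it.
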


Together both estimates imply our main result Theorem~\ref{thm:MainThm}.

\subsection{Concluding remarks}

The strategy of proof is due to Bernstein--Reznikov \cite{BR04} who applied this technique to the case $G=G'\times G'$ and $G'=\PSL(2,\RR)$, embedded in $G$ as the diagonal (see also \cite{BR10,Rez08}). The polynomial estimation of the coefficients $a_j$ in Section~\ref{sec:PolyBounds} pretty much follows their proof. The key new ingredient for our case are the model invariant bilinear forms $\ell_j^\mod$ which correspond to intertwining operators first studied by Kobayashi--Speh~\cite{KS} (see also \cite{Kob13,MO12}). Together with Y. Oshima we generalized these intertwining operators to various symmetric pairs $(G,G')$, see \cite{MOO}. In particular, our construction includes the product situation $G=G'\times G'$ with $G'$ embedded as the diagonal which was studied before (see \cite{CKOP11,Dei06}). For our application the multiplicity one property is crucial and proved in \cite{MOO} (see also \cite{AGRS10,KS,SZ12} for the case $m=n-1$). The heart of the proof of Theorem~\ref{thm:MainThm} is then the calculation of the special values of the model invariant form in Section~\ref{sec:SpecialValue} (see also \cite{KS}). This calculation is radically different from the one used in \cite{BR04}. We use the Fourier transformed realization of principal series representations where the model invariant form corresponds to an intertwining operator between representations of $G'$ and $G$ which is given by integration against a hypergeometric function. This allows us to derive the special values of the form from certain integral formulas for special functions.
\section{Hyperbolic manifolds and geodesic cycles}\label{sec:HyperbolicManifoldsGeodesicCycles}

We recall the geometric setting of hyperbolic manifolds and geodesic cycles.

\subsection{Hyperbolic manifolds}

Let $Y$ be a connected hyperbolic manifold, i.e. $Y$ is a complete connected Riemannian manifold of constant sectional curvature $-1$. Then the universal cover $\widetilde{Y}$ of $Y$ is isomorphic to the hyperbolic space $\HH^n$ where $n=\dim Y$. Let $G=\Isom(\HH^n)$ be the isometry group of $\HH^n$. Then $\Gamma:=\pi_1(Y)\subseteq G$ is a torsion-free discrete subgroup and we can identify $Y\cong\Gamma\backslash\HH^n$.

We realize $\HH^n$ as the one-sheeted hyperboloid
\begin{equation*}
 \HH^n = \{x\in\RR^{n+1}:Q(x)=-1,\,x_1>0\} \subseteq \RR^{n+1},
\end{equation*}
where $Q(x)=-x_1^2+x_2^2+\cdots+x_{n+1}^2$. We endow $\HH^n$ with the metric induced from the Lorentzian metric on $\RR^{n+1}$ corresponding to the quadratic form $Q$. Then $\HH^n$ has constant negative curvature $-1$. In this realization $G=\Isom(\HH^n)$ is a normal subgroup of $O(1,n)$ of index $2$. Here $O(1,n)$ is the subgroup of $\GL(n+1,\RR)$ preserving the quadratic form $Q$. Then $G$ is the group of all $g\in O(1,n)$ such that $(ge_1)_1>0$ with $e_1=(1,0,\ldots,0)\in\HH^n$. The group $G$ has two connected components and acts transitively on $\HH^n$. The stabilizer subgroup of $e_1\in\HH^n$ is the maximal compact subgroup $K=\diag(1,O(n))\cong O(n)$. Hence we can identify $\HH^n\cong G/K$ as Riemannian symmetric spaces. The metric on $\HH^n$ can be viewed as the metric induced from an $\ad$-invariant bilinear form on the Lie algebra $\frakg=\so(1,n)$ of $G$. Namely, $\HH^n=G/K$ carries the Riemannian structure induced from the form
\begin{equation*}
 \kappa(X,Y) = \frac{1}{2}\tr(XY), \qquad X,Y\in\frakg.
\end{equation*}
This form is $\ad$-invariant and non-degenerate on $\frakg$ and hence a scalar multiple of the Killing form of $\frakg$.

For the hyperbolic manifold $Y$ we obtain the identification
\begin{equation*}
 Y \cong \Gamma\backslash G/K.
\end{equation*}
Note that $Y$ is orientable if and only if $\Gamma$ is contained in the identity component $G_0=\SO_0(1,n)$ of $G$. The manifold $Y$ is modelled on $\HH^n=G/K$ and hence inherits the metric of $\HH^n$. The induced Riemannian measure on $Y$ defines the space $L^2(Y)$. Denote the corresponding Laplace--Beltrami operator on $Y$ by $\Delta$. Then $\Delta$ extends to a self-adjoint operator on $L^2(Y)$.

Consider the space $X:=\Gamma\backslash G$ with the natural $G$-action from the right. Since the tangent space of $X$ at $\Gamma e$ is equal to $\frakg$ the space $X$ carries a $G$-invariant pseudo-Riemannian structure induced from the form $\kappa$ on $\frakg$. Again, the corresponding Riemannian measure defines the space $L^2(X)$. Let $\Box$ denote the corresponding Laplace--Beltrami operator on $X$. This operator again extends to a self-adjoint operator on $L^2(X)$. There is also a representation theoretic description of $\Box$. Namely, $\Box$ is up to sign the action of the Casimir element $\calC\in\calU(\frakg)$ with respect to $\kappa$ on $C^\infty(X)$ by the right-regular representation.

Now consider the principal bundle
\begin{equation*}
 X=\Gamma\backslash G \to \Gamma\backslash G/K=Y
\end{equation*}
with structure group $K$. We can identify functions on $Y$ with $K$-invariant functions on $X$. Since $K$ is compact this identification induces an isomorphism $L^2(Y)\cong L^2(X)^K$. If $\phi\in L^2(Y)$ is additionally an eigenfunction of $\Delta$ then the corresponding function $\phi\in L^2(X)^K$ is an eigenfunction of $\Box$ for the same eigenvalue.

\subsection{Geodesic cycles}\label{sec:GeodesicCycles}

We call a totally geodesic submanifold $Y'\subseteq Y$ a \textit{geodesic cycle} (sometimes also referred to as \textit{modular variety}). Let $m=\dim(Y')$. As remarked by Bergeron~\cite[remark after Definition 1]{Ber99} each geodesic cycle is itself a hyperbolic manifold and can be written as $Y'=\Gamma'\backslash\HH^m$. Here one can view $\HH^m$ as a totally geodesic subspace of $\HH^n$ and $\Gamma'=\Gamma\cap G'$ with $G'=\Stab_G(\HH^m)$. By conjugating with an element of $G$ we may assume that $\HH^m\subseteq\HH^n$ is induced by the canonical embedding $\RR^{m+1}\subseteq\RR^{n+1}$ as the first $m+1$ coordinates. Then $G'=\Isom(\HH^m)\times O(n-m)\subseteq O(1,m)\times O(n-m)$.

The subgroup $G'\subseteq G$ is symmetric, i.e. an open subgroup of the fixed point set of an involution of $G$. In fact, $G'=G^\sigma$ with $\sigma$ the involution given by conjugation with the matrix $\diag(\1_{m+1},-\1_{n-m})\in\GL(n+1,\RR)$.

One can construct compact geodesic cycles in the following way. Let $W\subseteq\HH^n$ be a totally geodesic subspace of dimension $m$. Then $W=\HH^n\cap U$ with $U\subseteq\RR^{n+1}$ a linear subspace of dimension $m+1$ on which $Q$ has signature $(m,1)$. Conjugation with the orthogonal reflection at $U$ defines an involution $\sigma$ of $G$ such that $W=G^\sigma/K^\sigma$. The totally geodesic subspace $W$ is called $\Gamma$-compatible if $\sigma\Gamma=\Gamma$. It is proved by Millson--Raghunathan~\cite[Proposition 2.1]{MR81} that if $\Gamma$ is cocompact and $W$ is $\Gamma$-compatible then $\Gamma^\sigma\backslash W=\Gamma^\sigma\backslash G^\sigma/K^\sigma$ is a compact totally geodesic submanifold of $Y$.

It is not clear whether there exist $\Gamma$-compatible totally geodesic subspaces $W\subseteq\HH^n$ for arbitrary discrete subgroups $\Gamma$. However, for $\Gamma$ a group of units over a totally real number field Kudla--Millson~\cite[Section 6]{KM82} construct a large family of $\Gamma$-compatible totally geodesic subspaces of arbitrary dimension $m$ and hence compact geodesic cycles in $Y=\Gamma\backslash G/K$ of arbitrary dimension (see also Millson~\cite[Section 2]{Mil76} for the case $m=n-1$). This provides examples for the setting discussed in this section. Other examples can be constructed similarly using the results by Borel--Harish-Chandra~\cite{BHC62} and Mostow--Tamagawa~\cite{MT62}.
\section{Representation theory - invariant bilinear forms on principal series}

We recall the classification of spherical irreducible unitary representations of $G$ and their relation to eigenfunctions of the Laplacian on $L^2(Y)$. We further describe the recent work of Kobayashi--Speh~\cite{KS} and M\"{o}llers--Oshima--{\O}rsted~\cite{MOO} on invariant bilinear forms on products of principal series representations of $G$ and $G'$.

\subsection{Geometry of the group $G$}

We fix the Cartan involution $\theta$ of $G$ corresponding to the maximal compact subgroup $K=\diag(1,O(n))\subseteq G$. The Lie algebra $\frakg$ of $G$ has the Cartan decomposition $\frakg=\frakk\oplus\frakp$ into the $\pm1$ eigenspaces $\frakk$ and $\frakp$ of $\theta$ where $\frakk$ is the Lie algebra of $K$. Choose the maximal abelian subalgebra $\fraka:=\RR H_0\subseteq\frakp$ spanned by the element
\begin{align*}
 H_0 &:= E_{1,2}+E_{2,1},
\end{align*}
where $E_{ij}$ denotes the $(n+1)\times(n+1)$ matrix with $1$ in the $(i, j)$-entry and $0$ elsewhere. The root system of the pair $(\frakg,\fraka)$ consists only of the roots $\pm\gamma$ where $\gamma\in\fraka_\CC^*$ is defined by $\gamma(H_0):=1$. In what follows we will identify $\fraka_\CC^*\cong\CC$ by means of the isomorphism
\begin{equation}
 \fraka_\CC^*\to\CC,\,\lambda\mapsto \lambda(H_0).\label{eq:IdentificationAC*}
\end{equation}
Then half the sum of all positive roots is given by $\rho=\frac{n-1}{2}$.

Put
\begin{align*}
 \frakn &:= \frakg_{\gamma}, & \overline{\frakn} &:= \frakg_{-\gamma}=\theta\frakn
\intertext{and let}
 N &:= \exp_G(\frakn), & \overline{N} &:= \exp_G(\overline{\frakn})=\theta N
\end{align*}
be the corresponding analytic subgroups of $G$. We introduce the following coordinates on $N$ and $\overline{N}$: For $1\leq j\leq n-1$ let
\begin{align*}
 N_j &:= E_{1,j+2}+E_{2,j+2}+E_{j+2,1}-E_{j+2,2},\\
 \overline{N}_j &:= E_{1,j+2}-E_{2,j+2}+E_{j+2,1}+E_{j+2,2}.
\end{align*}
Then for $x\in\RR^{n-1}$ we put
\begin{equation*}
 n_x := \exp\Big(\sum_{j=1}^{n-1}{x_jN_j}\Big)\in N, \qquad \overline{n}_x := \exp\Big(\sum_{j=1}^{n-1}{x_j\overline{N}_j}\Big)\in\overline{N}.
\end{equation*}
Further put $M:=Z_K(\fraka)$ and $A:=\exp(\fraka)$ and denote by $\frakm$ the Lie algebra of $M$. Then $M=\diag(1,1,O(n-1))\cong O(n-1)$. The group $P:=MAN$ is a parabolic subgroup of $G$ and $\overline{N}P\subseteq G$ is an open dense subset. Let $W:=N_K(\fraka)/Z_K(\fraka)$ be the Weyl group corresponding to $\fraka$. Then $W=\{\1,[w_0]\}$ with $w_0\in K$ acting on $\fraka$ by $\Ad(w_0)|_\fraka=-\id_\fraka$. Hence $w_0^{-1}Nw_0=\overline{N}$ and $w_0^{-1}Pw_0=\overline{P}:=\theta(P)$. On the group level we have the decompositions
\begin{align}
 G &= KAN && \mbox{(Iwasawa decomposition),}\label{eq:IwasawaDecomp}\\
 G &= w_0MAN\cup \overline{N}MAN && \mbox{(Bruhat decomposition)}.\label{eq:BruhatDecomp}
\end{align}
Corresponding to the two decompositions we define functions $H:G\to\fraka$, $\overline{n}:\overline{N}MAN\to\overline{N}$ and $a:\overline{N}MAN\to A$ by
\begin{equation*}
 g \in Ke^{H(g)}N, \qquad g\in \overline{n}(g)Ma(g)N.
\end{equation*}
A straightforward calculation yields
\begin{equation}
 H(\overline{n}_x) = (1+|x|^2)H_0, \qquad x\in\RR^{n-1}.\label{eq:IwasawaProjectionNbar}
\end{equation}

Let $\sigma$ be the involution of $G$ given by conjugation with the matrix $\diag(\1_{m+1},-\1_{n-m})$. Then the symmetric subgroup $G'=G^\sigma$ is given by
\[ G'=(O(1,m)\times O(n-m))\cap G=\Isom(\HH^m)\times O(n-m). \]
It has
\[ K'=K\cap G'=\diag(1,O(m),O(n-m))\cong O(m)\times O(n-m) \]
as maximal compact subgroup.

\subsection{Spherical representations}

For $\nu\in\fraka_\CC^*$ we consider the induced representation (normalized smooth parabolic induction)
\begin{align*}
 I(\nu) &= \Ind_P^G(\1\otimes e^\nu\otimes\1)\\
 &= \{f\in C^\infty(G):f(gman)=a^{-\nu-\rho}f(g)\,\forall\,g\in G,man\in MAN\},
\end{align*}
endowed with the left regular action of $G$:
\begin{equation*}
 (g\cdot f)(x) = f(g^{-1}x), \qquad g,x\in G,\,f\in I(\nu).
\end{equation*}
Then it is well-known that $I(\nu)$ is irreducible and unitarizable if and only if $\nu\in i\RR\cup(-\rho,\rho)$. For these parameters we have $I(\nu)\cong I(-\nu)$. Further, for $\nu=-\rho$ the representation $I(\nu)$ contains the trivial representation as a subrepresentation and for $\nu=\rho$ the trivial representation is the quotient of $I(\nu)$ modulo its unique non-trivial subrepresentation.

All the representations $I(\nu)$ are spherical, the $K$-invariant vectors being the functions which are constant on $K$ and are extended to $G=KP$ according to the transformation law in $I(\nu)$. Conversely, every non-trivial spherical irreducible unitary representation of $G$ is equivalent to $I(\nu)$ for some $\nu\in i\RR\cup(-\rho,\rho)$.

For $\nu\in i\RR$ the invariant Hermitian form $\|\blank\|_\nu$ on $I(\nu)$ is given by
\begin{equation}
 \|f\|_\nu^2 = \int_K |f(k)|^2 \td k,\label{eq:InvariantNormInducedPicture}
\end{equation}
where we denote by $\td k$ the Haar measure on $K$ of mass one. For $\nu\in(0,\rho)$ the invariant Hermitian form $\|\blank\|_\nu$ on $I(\nu)$ is more complicated and we will only describe it in the non-compact picture (see \eqref{eq:InvariantNormFlatPicture2}).

Consider the Casimir element $\calC\in\calU(\frakg)$ with respect to the $\ad$-invariant bilinear form $\kappa$ on $\frakg$. By \cite[Lemma 12.28]{Kna86} the Casimir $\calC$ acts on $I(\nu)$ by the scalar
\begin{equation*}
 |\nu+\rho_\frakt|^2-|\rho_\frakh|^2.
\end{equation*}
Here $\frakh=\frakt+\fraka$ is a Cartan subalgebra of $\frakg$ with $\frakt\subseteq\frakm$ and $\rho_\frakh$ and $\rho_\frakt$ the corresponding half sums of positive roots. It is easy to see that under the identification $\fraka_\CC^*\cong\CC$ as in \eqref{eq:IdentificationAC*} this scalar equals
\begin{equation}
 \nu^2-\rho^2.\label{eq:EigenvalueCasimirPrincipalSeries}
\end{equation}

\subsection{The non-compact picture}

Since $\overline{N}P\subseteq G$ is open dense, a function $f\in I(\nu)$ is already uniquely determined by its restriction to $\overline{N}$. Parameterizing $\overline{N}$ by its Lie algebra $\overline{\frakn}\cong\RR^{n-1}$ we obtain a realization of $I(\nu)$ on smooth functions on $\RR^{n-1}$. More precisely we define for every $f\in I(\nu)$ a function $\calR f\in C^\infty(\RR^{n-1})$ by
\begin{equation*}
 \calR f(x) := f(\overline{n}_x), \qquad x\in\RR^{n-1}.
\end{equation*}
The image of $I(\nu)$ under $\calR$ will be denoted by $J(\nu)$. We have
\begin{equation*}
 C_c^\infty(\RR^{n-1})\subseteq J(\nu)\subseteq C^\infty(\RR^{n-1}).
\end{equation*}
The $G$-action $\pi_\nu$ on $J(\nu)$ is defined by
\begin{equation*}
 \pi_\nu(g)(\calR f) = \calR(g\cdot f).
\end{equation*}
Using \eqref{eq:IwasawaProjectionNbar} we find that the $K$-invariant vector in this realization is given by the function
\begin{equation*}
 \psi_\nu(x) := \calR\1(x) = (1+|x|^2)^{-(\nu+\rho)}, \qquad x\in\RR^{n-1}.
\end{equation*}
Further, the invariant Hermitian form on $J(\nu)$ is for $\nu\in i\RR$ given by
\begin{equation}
 \|f\|_\nu^2 = \frac{\Gamma(2\rho)}{\pi^{\rho}\Gamma(\rho)}\int_{\RR^{n-1}} |f(x)|^2 \td x,\label{eq:InvariantNormFlatPicture}
\end{equation}
and for $\nu\in(0,\rho)$ by
\begin{equation}
 \|f\|_\nu^2 = \frac{\Gamma(2\rho)\Gamma(\nu+\rho)}{\pi^\rho\Gamma(\rho)\Gamma(\nu)}\int_{\RR^{n-1}}\int_{\RR^{n-1}} |x-y|^{2(\nu-\rho)}f(x)\overline{f(y)} \td x\td y.\label{eq:InvariantNormFlatPicture2}
\end{equation}
Note that in this normalization the spherical vector $\psi_\nu$ always has norm one.

The action $\pi_\nu$ can be written in terms of the rational action of $G$ on $\RR^{n-1}$. This rational action is defined by
\begin{equation*}
 \overline{n}_{g\cdot x} := \overline{n}(g\overline{n}_x), \qquad g\in G,x\in\RR^{n-1}.
\end{equation*}
Since $\overline{N}\cdot eP\subseteq G/P$ is open dense with complement only one point, every $g\in G$ acts on every point in $\RR^{n-1}$ except possibly one exception. This defines an action of $G$ on $\RR^{n-1}$ by rational transformations. Now, if a rational transformation $g^{-1}\in G$ is defined at $x\in\RR^{n-1}$ then one can rewrite the action $\pi_\nu$ as
\begin{equation}
 \pi_\nu(g)f(x) = j(g^{-1},x)^{\nu+\rho}f(g^{-1}\cdot x),\label{eq:ActionFlatPicture}
\end{equation}
where $j(g,x)=a(g\overline{n}_x)^{-\gamma}=|\Det Dg(x)|^{\frac{1}{n-1}}$ is the conformal factor. We have
\begin{equation}
 |g\cdot x-g\cdot y|^2 = j(g,x)|x-y|^2j(g,y) \qquad \forall g\in G,x,y\in\RR^{n-1}.\label{eq:NormConformal}
\end{equation}
Change of variables $x\mapsto g\cdot x$ gives the following integral formula:
\begin{equation}
 \int_{\RR^{n-1}} u(g\cdot x)j(g,x)^{2\rho} \td x = \int_{\RR^{n-1}} u(x) \td x.\label{eq:TrafoFormulaFlatPicture}
\end{equation}

The same constructions apply to the group $\Isom(\HH^m)\subseteq O(1,m)$ for which we realize the representations on $\RR^{m-1}$. We view the representations of $\Isom(\HH^m)$ as representations of the symmetric subgroup $G'=\Isom(\HH^m)\times O(n-m)$ by extending them trivially on the compact factor. To distinguish between representations of $G$ and $G'$ we write $(\pi_\nu,J(\nu))$ for the representations of $G$ and $(\pi'_{\nu'},J'(\nu'))$ for those of $G'$. Accordingly, we denote by $\psi_\nu$ and $\psi_{\nu'}'$ the spherical vectors. Further, write $\rho'=\frac{m-1}{2}$ for the half sum of all positive roots for $G'$. Note that the parabolic subgroups in $G$ and $G'$ share the same $A$.

\subsection{Model invariant bilinear forms}\label{sec:ModelForm}

We describe how to construct invariant bilinear forms on $J(\nu)\times J'(\nu')$. For details we refer the reader to \cite{KS} or \cite{MOO} where the corresponding intertwining operators are constructed. Write $x=(x',x'')\in\RR^{m-1}\times\RR^{n-m}=\RR^{n-1}$. For $\Re\nu-\rho\geq\Re\nu'-\rho'\geq0$ the integral
\begin{multline}
 \ell_{\nu,\nu'}^\mod(f_1,f_2) = \\
 \int_{\RR^{n-1}}\int_{\RR^{m-1}} (|x'-y|^2+|x''|^2)^{\nu'-\rho'}|x''|^{(\nu-\rho)-(\nu'-\rho')}f_1(x)f_2(y)\td y\td x\label{eq:ModelFormInFlatPicture}
\end{multline}
converges for all $f_1\in J(\nu)$, $f_2\in J'(\nu')$ and defines a bilinear form on $J(\nu)\times J'(\nu')$ which is invariant under the diagonal action of $G'$ by $\pi_\nu|_{G'}\otimes\pi'_{\nu'}$. This form has a meromorphic continuation in the parameters $\nu,\nu'\in\CC$. The uniqueness result in \cite[Theorem 4.1]{MOO} for intertwining operators immediately implies the following uniqueness result for invariant bilinear forms (see also \cite[Section 3.5.1]{MOO} for the precise relation between invariant bilinear forms and intertwining operators):

\begin{theorem}\label{thm:UniquenessInvariantForms}
For $\nu+\rho\pm\nu'-\rho'\notin(-2\NN_0)$ the space of $G'$-invariant bilinear forms on $J(\nu)\times J'(\nu')$ is at most one-dimensional.
\end{theorem}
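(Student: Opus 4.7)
The plan is to deduce this theorem from the multiplicity-one statement for symmetry-breaking intertwining operators proved in \cite[Theorem 4.1]{MOO}, via the standard duality that converts $G'$-invariant bilinear forms into $G'$-equivariant maps into a contragredient representation.

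First I would pin down the bijection between the two classes of objects. To a continuous $G'$-invariant bilinear form $\ell$ on $J(\nu)\times J'(\nu')$ I associate the $G'$-equivariant linear map
\begin{equation*}
 T_\ell:J(\nu)|_{G'}\to(J'(\nu'))^{-\infty},\qquad T_\ell(f_1)(f_2)=\ell(f_1,f_2),
\end{equation*}
where $(J'(\nu'))^{-\infty}$ denotes the space of distribution vectors in the contragredient representation of $J'(\nu')$, equipped with the natural $G'$-action. Conversely any such intertwining operator $T$ produces an invariant form $\ell_T(f_1,f_2)=\langle T(f_1),f_2\rangle$. This is the correspondence spelled out in \cite[Section 3.5.1]{MOO}, and it reduces the theorem to a bound on $\dim\Hom_{G'}(J(\nu)|_{G'},(J'(\nu'))^{-\infty})$.

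Next I would identify the contragredient explicitly. Because we are using normalized parabolic induction from $P'$, the contragredient of $J'(\nu')=\Ind_{P'}^{G'}(\1\otimes e^{\nu'}\otimes\1)$ is naturally isomorphic, as a $G'$-module, to $\Ind_{P'}^{G'}(\1\otimes e^{-\nu'}\otimes\1)$, realized on distributions on $\RR^{m-1}$ rather than smooth functions. This transports the question into the exact setting of \cite{MOO}: we only need to bound the dimension of the space of symmetry-breaking operators from $J(\nu)|_{G'}$ to $J'(-\nu')$.

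Finally I would apply \cite[Theorem 4.1]{MOO}, which asserts that this space is at most one-dimensional provided the parameter pair avoids a discrete exceptional locus of the form $\nu+\rho\pm(-\nu')-\rho'\in-2\NN_0$. Because the $\pm$ absorbs the sign flip, substituting $\nu'\mapsto-\nu'$ leaves the forbidden set as exactly $\nu+\rho\pm\nu'-\rho'\in-2\NN_0$, matching the hypothesis of the theorem. The one subtle point to verify is that the bijection between forms and operators respects the relevant topologies: continuous bilinear forms on smooth induced representations land automatically in the distribution-vector contragredient rather than in the smooth dual, and \cite[Section~3.5.1]{MOO} provides the precise framework that guarantees this, so the uniqueness passes from intertwining operators to invariant bilinear forms without loss.
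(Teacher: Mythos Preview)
Your proposal is correct and follows exactly the approach the paper indicates: the paper does not give a detailed proof but simply states that the theorem follows from \cite[Theorem~4.1]{MOO} via the correspondence between invariant bilinear forms and intertwining operators described in \cite[Section~3.5.1]{MOO}. You have spelled out this correspondence in more detail than the paper does, including the identification of the contragredient with $J'(-\nu')$ and the observation that the $\pm$ absorbs the sign flip in the parameter condition, but the route is the same.
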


In fact, in the case $m=n-1$ Kobayashi--Speh~\cite{KS} show that uniqueness holds for an even larger set of parameters and they find all invariant bilinear forms for arbitrary parameters $\nu,\nu'$ (see also \cite{Kob13}). However, since we also need the case of general $0<m<n$ for which the detailed analysis in \cite{KS} is not available, we use the result in \cite{MOO} instead. We also refer the reader to the recent results on multiplicity one statements by Aizenbud--Gourevitch--Rallis--Schiffmann~\cite{AGRS10} and Sun--Zhu~\cite{SZ12} (see also references therein) which also give the necessary multiplicity one property for the case $m=n-1$.

\subsection{Automorphic representations}\label{sec:AutomorphicRepresentations}

Consider the setting of Section~\ref{sec:HyperbolicManifoldsGeodesicCycles}. Let $\phi\in L^2(Y)\cong L^2(X)^K$ be a non-trivial automorphic form on $Y$ for the eigenvalue $\lambda$. Let $\calH\subseteq L^2(X)$ be the closed subrepresentation of $L^2(X)$ generated by $\phi$ under the right-regular representation of $G$. Then by \cite{GGPS69} the representation $\calH$ is an irreducible spherical unitary representation of $G$. Denote by $V=\calH^\infty\subseteq C^\infty(X)$ its subspace of smooth vectors. Then there exists a $G$-equivariant isometry $J(\nu)\to V$ for some $\nu\in i\RR\cup(-\rho,\rho)$. The Casimir $\calC$ of $\frakg$ acts on $V$ by the negative Laplacian $-\Box$ and hence by the scalar $-\lambda$. On the other hand, by \eqref{eq:EigenvalueCasimirPrincipalSeries} the Casimir acts on $J(\nu)$ by $\nu^2-\rho^2$. Therefore, $\nu$ is up to sign uniquely determined by the equation
\begin{equation*}
 \lambda = \rho^2-\nu^2.
\end{equation*}
To simplify estimates later we always choose $\nu$ such that $\Re\nu\geq0$. We will identify $V\cong J(\nu)$ in what follows. This identifies the invariant Hermitian form on $V$ induced by the $L^2$-inner product with the invariant Hermitian form $\|\blank\|_\nu$ on $J(\nu)$.

Similarly we obtain irreducible spherical unitary representations $\calH_j\subseteq L^2(X')$ of $G'$ for any $j\in\NN$, generated by the basis vectors $\phi_j\in L^2(Y')\cong L^2(X')^{K'}$. For the spaces $V_j=\calH_j^\infty\subseteq C^\infty(X')$ of smooth vectors we consider their complex conjugates $\overline{V_j}\subseteq C^\infty(X')$. They can naturally be identified with the smooth vectors in the representation dual to $\calH_j$. Clearly the Laplacian on $X'$ acts on $\overline{V_j}$ by the same eigenvalue $\lambda_j\in\RR$ as on $V_j$. Therefore we can, as above, identify $\overline{V_j}\cong J'(\nu'_j)$ for some $\nu_j'\in i\RR\cup(-\rho',\rho')$ with
\begin{equation}
 \lambda_j = \rho'^2-\nu_j'^2.\label{eq:RelationLambdaNu}
\end{equation}

Under the identifications $V\cong J(\nu)$  and $\overline{V_j}\cong J'(\nu_j')$ the automorphic forms $\phi\in V$ and $\overline{\phi_j}\in\overline{V_j}$ correspond (up to multiplication with units) to the spherical vectors $\psi_\nu\in J(\nu)$ and $\psi_{\nu_j'}'\in J'(\nu_j')$, respectively. Hence
\begin{equation*}
 |c_j| = |\ell^\aut_j(\psi_\nu,\psi_{\nu_j'}')|,
\end{equation*}
where $c_j$ are the coefficients in the Fourier expansion~\eqref{eq:FourierExpansion} of $\phi$ on $Y'$ and $\ell_j^\aut$ denote the automorphic invariant bilinear forms defined in \eqref{eq:DefAutInvForm}. Write $\ell_j^\mod:=\ell_{\nu,\nu_j'}^\mod$ for the model invariant bilinear form on $J(\nu)\times J'(\nu_j')$. By the uniqueness result in Theorem~\ref{thm:UniquenessInvariantForms} the form $\ell_j^\aut$ is proportional to $\ell_j^\mod$ if the condition
\begin{equation}
 \nu+\rho\pm\nu_j'-\rho'\notin(-2\NN_0)\label{eq:ConditionUniqueness}
\end{equation}
is satisfied. Since $\lambda_j\to\infty$ we also have $|\nu_j'|\to\infty$ by \eqref{eq:RelationLambdaNu}. Note that this implies that for $j\gg0$ we have $\nu_j'\in i\RR$. Hence the condition~\eqref{eq:ConditionUniqueness} is fulfilled for all but finitely many $j$. The estimate in Theorem~\ref{thm:MainThm} is independent of the values of finitely many $b_j$ and therefore we may disregard the finitely many $j$ for which \eqref{eq:ConditionUniqueness} is not true. In what follows we assume that $j\in\NN$ such that $\nu_j'\in i\RR$ and that \eqref{eq:ConditionUniqueness} holds. For these $j$ we obtain proportionality constants $a_j\in\CC$ such that $\ell_j^\aut=a_j\ell_j^\mod$. Hence we find
\begin{equation*}
 |c_j| = |a_j|\cdot|\ell^\mod_{\nu,\nu_j'}(\psi_\nu,\psi_{\nu_j'}')|.
\end{equation*}
In Section~\ref{sec:SpecialValue} we find an explicit formula for the factor $\ell^\mod_{\nu,\nu_j'}(\psi_\nu,\psi_{\nu_j'}')$, proving Theorem~\ref{thm:TwoEstimates}~(1). Estimates for the coefficients $a_j$ are derived in Section~\ref{sec:PolyBounds} which proves Theorem~\ref{thm:TwoEstimates}~(2).
\section{Special value of the model invariant form - exponential bounds}\label{sec:SpecialValue}

This section is devoted to the proof of Theorem~\ref{thm:TwoEstimates}~(1). More precisely, we prove the following explicit formula for $\ell_{\nu,\nu'}^\mod(\psi_\nu,\psi'_{\nu'})$:

\begin{proposition}\label{prop:SpecialValue}
As meromorphic functions in $\nu,\nu'\in\CC$ the following identity holds:
\begin{equation*}
 \ell_{\nu,\nu'}^\mod(\psi_\nu,\psi_{\nu'}') = \frac{\pi^{\rho+\rho'}\Gamma(\rho')\Gamma(\frac{(\nu+\rho)+(\nu'-\rho')}{2})\Gamma(\frac{(\nu+\rho)-(\nu'+\rho')}{2})}{\Gamma(2\rho')\Gamma(\rho-\rho')\Gamma(\nu+\rho)}.
\end{equation*}
In particular, $\ell_{\nu,\nu'}^\mod$ is non-trivial if $\nu\in i\RR\cup(-\rho,\rho)$, $\nu'\in i\RR\cup(-\rho',\rho')$ and $\nu+\rho\pm\nu'-\rho'\notin(-2\NN_0)$.
\end{proposition}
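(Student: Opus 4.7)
The plan is to evaluate $\ell^\mod_{\nu,\nu'}(\psi_\nu,\psi'_{\nu'})$ directly from its integral definition~\eqref{eq:ModelFormInFlatPicture} on a region of absolute convergence and then extend the resulting identity by meromorphic continuation. Writing $a=\nu+\rho$, $a'=\nu'+\rho'$ and $x=(x',x'')\in\RR^{m-1}\times\RR^{n-m}$, and substituting $\psi_\nu(x)=(1+|x|^2)^{-a}$, $\psi'_{\nu'}(y)=(1+|y|^2)^{-a'}$, the integrand carries three factors of the form $A^{-s}$---namely the two spherical vectors and $(|x'-y|^2+|x''|^2)^{-(2\rho'-a')}$---together with a homogeneous factor $|x''|^{(a-a')-(n-m)}$. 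I apply the identity $A^{-s}=\frac{1}{\Gamma(s)}\int_0^\infty t^{s-1}e^{-tA}\,dt$ to each of the three power factors, introducing auxiliary variables $t_1,t_2,t_3>0$. The spatial integrand then becomes Gaussian in $x'$ and $y$: completing the square and applying $\int_{\RR^{m-1}}e^{-t|z|^2}\,dz=(\pi/t)^{(m-1)/2}$ twice produces $\pi^{2\rho'}(t_1t_2+t_1t_3+t_2t_3)^{-\rho'}$. The $x''$-integration in polar coordinates, followed by the substitution $u=|x''|^2$, reduces to a Gamma integral and yields $\frac{\pi^{\rho-\rho'}\Gamma((a-a')/2)}{\Gamma(\rho-\rho')}(t_1+t_2)^{-(a-a')/2}$.

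What remains is a triple integral in $t_1,t_2,t_3$ against the weight $e^{-t_2-t_3}$. I would evaluate it via the substitutions $t_1=st_2$ followed by $s=u/(1-u)$, $u\in(0,1)$; these separate the $t_2$-dependence into a single monomial factor and convert $s(t_2+t_3)+t_3$ into $(ut_2+t_3)/(1-u)$. Applying the Gamma trick once more to $(ut_2+t_3)^{-\rho'}$ with a new variable $v>0$ makes the $t_2$- and $t_3$-integrations elementary Gamma integrals. The surviving $u$-integral is Euler's representation of a ${}_2F_1$, but the parameters conspire so that one upper parameter coincides with the lower parameter, triggering the degeneracy
\begin{equation*}
{}_2F_1\bigl((a-a')/2+\rho',\,2\rho'-a';\,(a-a')/2+\rho';\,-v\bigr)=(1+v)^{a'-2\rho'}.
\end{equation*}
This reduces the entire computation to the beta integral $\int_0^\infty v^{\rho'-1}(1+v)^{-2\rho'}\,dv=\Gamma(\rho')^2/\Gamma(2\rho')$; collecting constants and rewriting $a,a'$ in terms of $\nu,\nu'$ then produces the claimed formula.

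Both sides of the asserted identity are meromorphic in $(\nu,\nu')\in\CC^2$: the right-hand side visibly as a quotient of Gamma functions, and the left-hand side by the meromorphic continuation of $\ell^\mod_{\nu,\nu'}$ recalled in Section~\ref{sec:ModelForm}. Hence the identity, proved on any non-empty open region of absolute convergence, extends to all of $\CC^2$. Non-triviality under the stated hypotheses follows by inspection of the formula: for $\nu\in i\RR\cup(-\rho,\rho)$ and $\nu'\in i\RR\cup(-\rho',\rho')$ the denominator factors $\Gamma(\nu+\rho)$, $\Gamma(\rho-\rho')$, $\Gamma(2\rho')$, $\Gamma(\rho')$ are finite and non-zero, and the hypothesis $\nu+\rho\pm\nu'-\rho'\notin(-2\NN_0)$ is exactly what keeps the two numerator Gammas finite, so the product is non-zero since $\Gamma$ has no zeros. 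The main obstacle is the careful bookkeeping of exponents through the chain of substitutions, so as to recognise the crucial ${}_2F_1$-degeneracy; without this coincidence one would be left with a genuinely transcendental hypergeometric function rather than the neat product of Gammas.
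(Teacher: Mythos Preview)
Your approach is correct and genuinely different from the paper's. The paper passes to the Fourier-transformed picture: it first rewrites $\ell^\mod_{\nu,\nu'}$ as an integral against a Gauss hypergeometric kernel (Lemma~\ref{lem:InvariantFormFTPicture}), computes $\calF\psi_\nu$ as a $K$-Bessel function (Lemma~\ref{lem:FourierTrafoSphericalVector}), and then evaluates the resulting double integral via a chain of tabulated identities for products of Bessel functions and hypergeometric functions (formulas \eqref{eq:HankelTrafoPartialNorm}--\eqref{eq:IntFormulaHypergeometric0infty}). Your route instead stays in the spatial picture and uses Schwinger parametrization to turn all three power factors into Gaussians; the spatial integrals become elementary, and the parameter integral collapses because of the ${}_2F_1$ degeneracy you identified (first upper parameter equals the lower one). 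I checked the bookkeeping: with your labeling $t_1\leftrightarrow$ kernel, $t_2\leftrightarrow\psi_\nu$, $t_3\leftrightarrow\psi'_{\nu'}$, the powers of $t_2$, $(1-u)$, and $u$ line up exactly, and the final beta integral produces precisely the stated Gamma product.

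What each approach buys: your argument is more self-contained---no special-function tables are needed beyond Euler's integral for ${}_2F_1$---and the ``miracle'' that makes the answer a pure Gamma product is isolated in a single transparent step. The paper's approach, on the other hand, yields as a by-product the Fourier-side expression for the intertwining operator (Lemma~\ref{lem:InvariantFormFTPicture}), which is of independent interest and connects to the literature cited there.

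Two small remarks. First, your Gamma trick for the kernel factor requires $\Re(2\rho'-a')>0$, i.e.\ $\Re(\nu'-\rho')<0$, which is \emph{disjoint} from the region $\Re(\nu'-\rho')\geq0$ where the paper initially defines the integral; you should say explicitly that for the particular test vectors $\psi_\nu,\psi'_{\nu'}$ (which decay polynomially) the integral \eqref{eq:ModelFormInFlatPicture} converges absolutely on a nonempty open set compatible with all your Schwinger conditions, and that this suffices by analytic continuation. Second, in your non-triviality paragraph you list $\Gamma(\rho')$ among the ``denominator factors''; it is in the numerator, and for $m=1$ it has a pole. This is harmless because the ratio $\Gamma(\rho')/\Gamma(2\rho')=\sqrt{\pi}\,2^{1-2\rho'}/\Gamma(\rho'+\tfrac12)$ is finite and nonzero for all $m\geq1$, but it is worth saying so.
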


Using Stirling's asymptotics for the Gamma function
\begin{equation*}
 |\Gamma(a+ib)| = \sqrt{2\pi}|b|^{a-\frac{1}{2}}e^{-\frac{\pi}{2}|b|}(1+\calO(|b|^{-1})) \qquad \mbox{as }|b|\to\infty,
\end{equation*}
the identity in Proposition~\ref{prop:SpecialValue} implies the following estimate:

\begin{corollary}\label{cor:EstimateSpecialValue}
For fixed $\nu\in i\RR\cup(-\rho,\rho)$ there exists a constant $c>0$ such that
\begin{equation*}
 |\ell_{\nu,\nu'}^\mod(\psi_\nu,\psi_{\nu'}')| \leq c|\nu'|^{\frac{n-m}{2}-1}e^{-\frac{\pi}{2}|\nu'|}, \qquad \nu'\in i\RR,\,\nu'\to\infty.
\end{equation*}
If $\Re\nu=0$ this estimate is sharp.
\end{corollary}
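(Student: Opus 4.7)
The plan is to read the estimate directly off the explicit formula in Proposition~\ref{prop:SpecialValue} by applying Stirling's asymptotic to the two Gamma factors that carry the $\nu'$-dependence. For fixed $\nu$, the prefactor
\[
 \frac{\pi^{\rho+\rho'}\Gamma(\rho')}{\Gamma(2\rho')\,\Gamma(\rho-\rho')\,\Gamma(\nu+\rho)}
\]
is a nonzero constant and can be absorbed into $c$, so the entire $\nu'$-dependence sits in the product
\[
 \Gamma\bigl(\tfrac{(\nu+\rho)+(\nu'-\rho')}{2}\bigr)\,\Gamma\bigl(\tfrac{(\nu+\rho)-(\nu'+\rho')}{2}\bigr).
\]

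Writing $\nu=\alpha+i\beta$ and $\nu'=it$ with $t\in\RR$ and $|t|\to\infty$, these two arguments take the shape $a+ib_\pm$ with common real part $a=\tfrac{\alpha+\rho-\rho'}{2}$ (bounded for fixed $\nu$) and imaginary parts $b_\pm=\tfrac{\beta\pm t}{2}$, so $|b_\pm|=\tfrac{|t|}{2}(1+O(|t|^{-1}))$. Applying the Stirling estimate
\[
 |\Gamma(a+ib)|=\sqrt{2\pi}\,|b|^{a-1/2}e^{-\pi|b|/2}\bigl(1+O(|b|^{-1})\bigr)
\]
to each factor and multiplying yields
\[
 \Bigl|\Gamma(a+ib_+)\,\Gamma(a+ib_-)\Bigr| \sim 2\pi\bigl(\tfrac{|t|}{2}\bigr)^{2a-1}e^{-\pi|t|/2}.
\]
Since $\rho-\rho'=\tfrac{n-m}{2}$, the exponent simplifies to $2a-1=\alpha+\tfrac{n-m}{2}-1$, and for $\alpha=\Re\nu=0$ this gives exactly the claimed bound $|\nu'|^{\frac{n-m}{2}-1}e^{-\pi|\nu'|/2}$.

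No genuine obstacle arises; the corollary is, in effect, Stirling applied to the closed expression of Proposition~\ref{prop:SpecialValue}, and the real work is in that proposition. Two points deserve brief mention. First, Stirling furnishes a two-sided asymptotic equivalence, so for $\Re\nu=0$ the polynomial exponent $\tfrac{n-m}{2}-1$ is optimal, which delivers the sharpness statement. Second, the $O(1)$ shifts in the imaginary parts (coming from $\beta$ and from $\pm\rho'/2$) do not affect the leading-order polynomial exponent because Stirling is uniform in $b$ once $|b|$ is large and $a$ is bounded; these shifts are harmlessly absorbed into the constant $c$.
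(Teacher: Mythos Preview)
Your proposal is correct and follows exactly the paper's approach: the paper's entire argument for this corollary is the single sentence ``Using Stirling's asymptotics for the Gamma function \ldots\ the identity in Proposition~\ref{prop:SpecialValue} implies the following estimate,'' and you have simply supplied the details of that Stirling computation, correctly isolating the two $\nu'$-dependent Gamma factors, computing their common real part $a=\tfrac{\Re\nu+\rho-\rho'}{2}$, and reading off the exponent $2a-1$ together with the exponential decay $e^{-\pi|\nu'|/2}$. Your observation that Stirling is a genuine two-sided asymptotic, hence gives sharpness when $\Re\nu=0$, is also precisely the intended point.
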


Now Corollary~\ref{cor:EstimateSpecialValue} implies Theorem~\ref{thm:TwoEstimates}~(1) in view of the relation \eqref{eq:RelationLambdaNu}.\\

The proof of Proposition~\ref{prop:SpecialValue} will be divided into two parts. First we rewrite the integral using the Fourier transform. Then we calculate the resulting integral using several integral formulas for special functions given in Appendix~\ref{sec:IntFormulasSpecialFcts}.

\subsection{The Fourier transform}

Consider the Euclidean Fourier transform $\calF_{\RR^k}:\calS'(\RR^k)\to\calS'(\RR^k)$ given by
\begin{equation*}
 \calF_{\RR^k}u(x) = (2\pi)^{-\frac{k}{2}}\int_{\RR^k} e^{-ix\cdot\xi}u(\xi) \td\xi.
\end{equation*}
It has the following properties (see e.g. \cite{SW71}):
\begin{align*}
 \mbox{(F1)}\quad & \int_{\RR^k}\calF_{\RR^k}u(x)\cdot v(x)\td x=\int_{\RR^k}u(x)\cdot\calF_{\RR^k}v(x)\td x,\\
 \mbox{(F2)}\quad & \calF_{\RR^k}^{-1}u(x)=\calF_{\RR^k}u(-x),\\
 \mbox{(F3)}\quad & \calF_{\RR^k}(u*v)=(2\pi)^{\frac{k}{2}}(\calF_{\RR^k}u)\cdot(\calF_{\RR^k}v),\\
 \mbox{(F4)}\quad & u(x)=\phi(|x|)\quad\Rightarrow\quad\calF_{\RR^k}u(x)=|x|^{-\frac{k-2}{2}}\int_0^\infty J_{\frac{k-2}{2}}(r|x|)\phi(r)r^{\frac{k}{2}}\td r,
\end{align*}
where $J_\alpha(z)$ denotes the classical $J$-Bessel function.

Let
\begin{equation*}
 N:=n-1, \quad M:=m-1.
\end{equation*}
Denote by $\calF_{\RR^N}$ the Fourier transform in the variable $x\in\RR^N$ and by $\calF_{\RR^M}$ and $\calF_{\RR^{N-M}}$ the Fourier transforms in the variables $x'\in\RR^M$ and $x''\in\RR^{N-M}$ where we write $x=(x',x'')\in\RR^M\times\RR^{N-M}$. Then $\calF_{\RR^N}=\calF_{\RR^M}\calF_{\RR^{N-M}}$.

In what follows we use the following abbreviation:
\begin{equation*}
 \alpha = \nu'-\rho', \qquad \beta = \frac{(\nu-\rho)-(\nu'-\rho')}{2}.
\end{equation*}

\begin{lemma}\label{lem:InvariantFormFTPicture}
For $f_1\in J(\nu)$ and $f_2\in J'(\nu')$ we have
\begin{multline*}
 \ell_{\nu,\nu'}^\mod(f_1,f_2) = \frac{2^{2\alpha+2\beta+\frac{M+N}{2}}\pi^{\frac{M}{2}}\Gamma(\alpha+\beta+\frac{N}{2})\Gamma(\beta+\frac{N-M}{2})}{\Gamma(\frac{N-M}{2})\Gamma(-\alpha)} \int_{\RR^N}|x'|^{-2(\alpha+\beta+\frac{N}{2})}\\
 \times{_2F_1}\left(\alpha+\beta+\frac{N}{2},\beta+\frac{N-M}{2};\frac{N-M}{2};-\frac{|x''|^2}{|x'|^2}\right)\calF_{\RR^N}f_1(x)\calF_{\RR^M}f_2(-x')\td x.
\end{multline*}
\end{lemma}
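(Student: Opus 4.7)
The kernel in \eqref{eq:ModelFormInFlatPicture} depends on $(x',y)$ only through $x'-y$, so the $y$-integration is a convolution in $x'$. The plan is to diagonalize this convolution by Fourier-transforming first in $x'$ and then in $x''$, so that at the end $f_1$ and $f_2$ enter only through $\calF_{\RR^N}f_1$ and $\calF_{\RR^M}f_2$, and the remaining kernel factors collapse into the Gauss hypergeometric expression in the statement. The whole argument rests on two classical special-function identities, one per Fourier step; no representation-theoretic input is needed.

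For the first Fourier step, set $h_a(z)=(|z|^2+a^2)^\alpha$ on $\RR^M$, so that the $y$-integration equals $|x''|^{2\beta}(h_{|x''|}\ast f_2)(x')$. Using the radial Fourier formula (F4) together with the classical Hankel integral for $h_a$,
\[
 \calF_{\RR^M}h_a(\xi')=\frac{2^{\alpha+1}}{\Gamma(-\alpha)}\,a^{\alpha+M/2}|\xi'|^{-\alpha-M/2}K_{\alpha+M/2}(a|\xi'|).
\]
Combining (F3) and (F1) then converts the $x'$-integration into an integration against the partial Fourier transform $\calF_{\RR^M,x'}f_1(-\eta,x'')$, leaving an integral over $(\eta,x'')\in\RR^M\times\RR^{N-M}$ whose remaining kernel in $x''$ is the Macdonald expression $|x''|^{2\beta+\alpha+M/2}K_{\alpha+M/2}(|x''||\eta|)$.

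For the second Fourier step, I would apply (F1) in $x''$ to move the Fourier transform off this kernel and onto $\calF_{\RR^M,x'}f_1(-\eta,\cdot)$, turning the latter into the full Fourier transform $\calF_{\RR^N}f_1(-\eta,-\xi'')$. By (F4) the Fourier transform in $\RR^{N-M}$ of the remaining radial kernel reduces to the Weber--Schafheitlin-type integral
\[
 \int_0^\infty r^{\alpha+2\beta+N/2}J_{(N-M-2)/2}(r|\xi''|)K_{\alpha+M/2}(r|\eta|)\td r,
\]
which is evaluated in closed form by a classical identity (e.g.\ Gradshteyn--Ryzhik~6.576.2, which I expect to appear in Appendix~\ref{sec:IntFormulasSpecialFcts}). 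A direct parameter check shows that the resulting ${}_2F_1$ has exactly the arguments $\alpha+\beta+N/2$, $\beta+(N-M)/2$, $(N-M)/2$ and variable $-|\xi''|^2/|\eta|^2$ required by the lemma.

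Assembling both steps, the combined prefactor works out to $(2\pi)^{M/2}\cdot 2^{\alpha+1}\cdot 2^{\alpha+2\beta+N/2-1}=2^{2\alpha+2\beta+(M+N)/2}\pi^{M/2}$, and the Gamma factors from the two special-function evaluations assemble into the claimed ratio $\Gamma(\alpha+\beta+N/2)\Gamma(\beta+(N-M)/2)/\bigl(\Gamma((N-M)/2)\Gamma(-\alpha)\bigr)$. The change of variables $(\eta,\xi'')\mapsto(-x',-x'')$ then converts $\calF_{\RR^N}f_1(-\eta,-\xi'')$ into $\calF_{\RR^N}f_1(x)$ and $\calF_{\RR^M}f_2(\eta)$ into $\calF_{\RR^M}f_2(-x')$, yielding the identity stated in the lemma. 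The intermediate integrals converge only in an open strip of $(\alpha,\beta)$, and the identity in full generality then follows by the meromorphic continuation of both sides. The main obstacle I expect is purely bookkeeping: tracking the powers of $2$, $\pi$ and Gamma functions through two successive special-function reductions, and verifying that the convergence strips for the Macdonald and Weber--Schafheitlin identities overlap.
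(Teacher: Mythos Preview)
Your proposal is correct and follows essentially the same route as the paper: write the $y$-integral as a convolution in $x'$, apply (F1)/(F3) together with the Hankel integral \eqref{eq:HankelTrafoPartialNorm} to obtain the $K$-Bessel kernel, then apply (F1)/(F2) in $x''$ and the Weber--Schafheitlin identity \eqref{eq:HankelTrafoKBessel} to produce the ${}_2F_1$ kernel. Your bookkeeping of the constants matches, and the paper likewise works first in a convergent strip before invoking meromorphic continuation.
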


\begin{proof}
Write
\begin{equation*}
 \ell_{\nu,\nu'}^\mod(f_1,f_2) = \int_{\RR^M}\int_{\RR^{N-M}}|x''|^{2\beta}f_1(x)(\phi_{\alpha,x''} *f_2)(x') \td x''\td x',
\end{equation*}
where $\phi_{\alpha,x''}(x')=(|x'|^2+|x''|^2)^\alpha$. Then by (F1) and (F3) we have
\begin{equation*}
 \ell_{\nu,\nu'}^\mod(f_1,f_2) = (2\pi)^{\frac{M}{2}}\int_{\RR^M}\int_{\RR^{N-M}}|x''|^{2\beta}\calF_{\RR^M}^{-1}f_1(x)\calF_{\RR^M}\phi_{\alpha,x''}(x')\calF_{\RR^M}f_2(x') \td x''\td x'.
\end{equation*}
We first calculate $\calF_{\RR^M}\phi_{\alpha,x''}(x')$. Since $\phi_{\alpha,x''}$ is a radial function we find by (F4) that
\begin{equation*}
 \calF_{\RR^M}\phi_{\alpha,x''}(x') = |x'|^{-\frac{M-2}{2}}\int_0^\infty J_{\frac{M-2}{2}}(r|x'|)(r^2+|x''|^2)^\alpha r^{\frac{M}{2}}\td r.
\end{equation*}
Using the integral formula~\eqref{eq:HankelTrafoPartialNorm} we obtain
\begin{equation*}
 = \frac{2^{\alpha+1}}{\Gamma(-\alpha)}\left(\frac{|x''|}{|x'|}\right)^{\alpha+\frac{M}{2}}K_{\alpha+\frac{M}{2}}(|x'|\cdot|x''|).
\end{equation*}
Inserting this we find
\begin{equation*}
 \ell_{\nu,\nu'}^\mod(f_1,f_2) = \frac{2^{\alpha+\frac{M}{2}+1}\pi^{\frac{M}{2}}}{\Gamma(-\alpha)}\int_{\RR^M}\int_{\RR^{N-M}}\varphi_{\alpha,\beta,x'}(x'')\calF_{\RR^M}^{-1}f_1(x)\calF_{\RR^M}f_2(x') \td x''\td x',
\end{equation*}
where $\varphi_{\alpha,\beta,x'}(x'')=|x'|^{-(\alpha+\frac{M}{2})}|x''|^{\alpha+2\beta+\frac{M}{2}}K_{\alpha+\frac{M}{2}}(|x'|\cdot|x''|)$. Using (F1) and (F2) we have
\begin{align*}
 &= \frac{2^{\alpha+\frac{M}{2}+1}\pi^{\frac{M}{2}}}{\Gamma(-\alpha)}\int_{\RR^M}\int_{\RR^{N-M}}\calF_{\RR^{N-M}}\varphi_{\alpha,\beta,x'}(x'')\calF_{\RR^N}^{-1}f_1(x)\calF_{\RR^M}f_2(x') \td x''\td x',\\
 &= \frac{2^{\alpha+\frac{M}{2}+1}\pi^{\frac{M}{2}}}{\Gamma(-\alpha)}\int_{\RR^M}\int_{\RR^{N-M}}\calF_{\RR^{N-M}}\varphi_{\alpha,\beta,x'}(x'')\calF_{\RR^N}f_1(-x)\calF_{\RR^M}f_2(x') \td x''\td x'.
\end{align*}
Now let us calculate $\calF_{\RR^{N-M}}\varphi_{\alpha,\beta,x'}(x'')$. Also $\varphi_{\alpha,\beta,x'}$ is a radial function and by (F4) we obtain
\begin{equation*}
 \calF_{\RR^{N-M}}\varphi_{\alpha,\beta,x'}(x'') = |x'|^{-(\alpha+\frac{M}{2})}|x''|^{-\frac{N-M-2}{2}}\int_0^\infty J_{\frac{N-M-2}{2}}(r|x''|)r^{\alpha+2\beta+\frac{M}{2}}K_{\alpha+\frac{M}{2}}(r|x'|)r^{\frac{N-M}{2}} \td r.
\end{equation*}
With the integral formula~\eqref{eq:HankelTrafoKBessel} this equals
\begin{multline*}
 = \frac{2^{\alpha+2\beta+\frac{N-2}{2}}\Gamma(\alpha+\beta+\frac{N}{2})\Gamma(\beta+\frac{N-M}{2})}{\Gamma(\frac{N-M}{2})} |x'|^{-2(\alpha+\beta+\frac{N}{2})}\\
 \times{_2F_1}\left(\alpha+\beta+\frac{N}{2},\beta+\frac{N-M}{2};\frac{N-M}{2};-\frac{|x''|^2}{|x'|^2}\right).
\end{multline*}
Inserting this into the above formula for $\ell_{\nu,\nu'}^\mod(f_1,f_2)$ gives the claim.
\end{proof}

\begin{remark}
The calculation in Lemma~\ref{lem:InvariantFormFTPicture} can also be found in \cite[Proposition 4.3]{Kob13} and \cite[Section 5]{MO12}. We included it for the sake of completeness.
\end{remark}

\begin{lemma}\label{lem:FourierTrafoSphericalVector}
The Fourier transform of $\psi_\nu(x)$ is given by
\begin{equation*}
 \calF_{\RR^N}\psi_\nu(x) = \frac{1}{2^{\nu+\frac{N-2}{2}}\Gamma(\nu+\frac{N}{2})} |x|^\nu K_\nu(|x|).
\end{equation*}
\end{lemma}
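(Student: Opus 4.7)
Since $\psi_\nu(x)=(1+|x|^2)^{-(\nu+\rho)}$ depends only on $|x|$ and $\rho=\frac{N}{2}$, property (F4) of the Fourier transform reduces the computation to the one-dimensional Hankel-type integral
\begin{equation*}
\calF_{\RR^N}\psi_\nu(x)=|x|^{-\frac{N-2}{2}}\int_0^\infty J_{\frac{N-2}{2}}(r|x|)\,(1+r^2)^{-(\nu+\frac{N}{2})}\,r^{\frac{N}{2}}\td r.
\end{equation*}

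The plan is then to recognize this as an instance of the classical Hankel-transform identity
\begin{equation*}
\int_0^\infty J_\mu(ar)\,(b^2+r^2)^{-\lambda-1}\,r^{\mu+1}\td r=\frac{a^\lambda\,b^{\mu-\lambda}}{2^\lambda\,\Gamma(\lambda+1)}\,K_{\mu-\lambda}(ab),
\end{equation*}
which is exactly the content of the integral formula \eqref{eq:HankelTrafoPartialNorm} used in the proof of Lemma~\ref{lem:InvariantFormFTPicture} (specialized to $b=1$). Taking $\mu=\frac{N-2}{2}$, $a=|x|$, $b=1$ and $\lambda+1=\nu+\frac{N}{2}$, so that $\lambda=\nu+\frac{N-2}{2}$ and $\mu-\lambda=-\nu$, I would read off
\begin{equation*}
\int_0^\infty J_{\frac{N-2}{2}}(r|x|)\,(1+r^2)^{-(\nu+\frac{N}{2})}\,r^{\frac{N}{2}}\td r=\frac{|x|^{\nu+\frac{N-2}{2}}}{2^{\nu+\frac{N-2}{2}}\,\Gamma(\nu+\frac{N}{2})}\,K_{-\nu}(|x|).
\end{equation*}
Using the symmetry $K_{-\nu}=K_\nu$ and multiplying by $|x|^{-\frac{N-2}{2}}$ yields the claimed formula as an identity for $\Re\nu$ sufficiently large to ensure absolute convergence; meromorphic continuation in $\nu$ then extends it to the whole complex plane, both sides being meromorphic in $\nu$.

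The only real obstacle is checking the parameter restrictions under which the Hankel-transform identity applies, in particular that $\mu>-1$ (automatic since $N\ge 1$) and that the exponent $\nu+\frac{N}{2}$ has sufficiently large real part for absolute convergence at $\infty$. Since both sides of the claimed identity are meromorphic functions of $\nu\in\CC$, no issue remains once the formula is established on a non-empty open set of parameters.
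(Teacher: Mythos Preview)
Your proof is correct and follows essentially the same approach as the paper: reduce to a Hankel integral via (F4), apply the integral formula~\eqref{eq:HankelTrafoPartialNorm}, and use the symmetry $K_{-\nu}=K_\nu$. The paper's proof is just a terser version of yours, omitting the explicit parameter matching and the remark on meromorphic continuation.
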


\begin{proof}
Since $\psi_\nu$ is a radial function we can use (F4) to find
\begin{equation*}
 \calF_{\RR^N}\psi_\nu(x) = |x|^{-\frac{N-2}{2}}\int_0^\infty J_{\frac{N-2}{2}}(r|x|)(1+r^2)^{-\nu-\frac{N}{2}}r^{\frac{N}{2}}\td r.
\end{equation*}
Then the integral formula~\eqref{eq:HankelTrafoPartialNorm} and the symmetry $K_\alpha(z)=K_{-\alpha}(z)$ imply the claimed identity.
\end{proof}

\subsection{Proof of Proposition~\ref{prop:SpecialValue}}

By Lemma~\ref{lem:InvariantFormFTPicture} and Lemma~\ref{lem:FourierTrafoSphericalVector} the expression $\ell_{\nu,\nu'}^\mod(\psi_\nu\otimes\psi_{\nu'}')$ is equal to
\begin{multline*}
 \frac{2^{2\alpha+2\beta-\nu-\nu'+2}\pi^{\frac{M}{2}}\Gamma(\alpha+\beta+\frac{N}{2})\Gamma(\beta+\frac{N-M}{2})}{\Gamma(\frac{N-M}{2})\Gamma(-\alpha)\Gamma(\nu+\frac{N}{2})\Gamma(\nu'+\frac{M}{2})} \int_{\RR^N}|x'|^{-2(\alpha+\beta+\frac{N}{2})}\\
 \times{_2F_1}\left(\alpha+\beta+\frac{N}{2},\beta+\frac{N-M}{2};\frac{N-M}{2};-\frac{|x''|^2}{|x'|^2}\right)|x|^\nu K_\nu(|x|)|x'|^{\nu'} K_{\nu'}(|x'|)\td x.
\end{multline*}
Introducing polar coordinates on both $\RR^M$ and $\RR^{N-M}$ we rewrite the integral as
\begin{multline*}
 \vol(S^{M-1})\vol(S^{N-M-1})\int_0^\infty\int_0^\infty r^{\nu'-2(\alpha+\beta+\frac{N}{2})+M-1}s^{N-M-1}(r^2+s^2)^{\frac{\nu}{2}}\\
 \times{_2F_1}\left(\alpha+\beta+\frac{N}{2},\beta+\frac{N-M}{2};\frac{N-M}{2};-\frac{s^2}{r^2}\right) K_\nu(\sqrt{r^2+s^2}) K_{\nu'}(r) \td r\td s.
\end{multline*}
Substituting $t=\frac{s^2}{r^2}$ and using the symmetry $K_\nu(z)=K_{-\nu}(z)$ gives
\begin{multline*}
 =\frac{\vol(S^{M-1})\vol(S^{N-M-1})}{2}\int_0^\infty\int_0^\infty r^{\nu+\nu'-2(\alpha+\beta)-1}t^{\frac{N-M}{2}-1}(1+t)^{\frac{\nu}{2}}\\
 \times{_2F_1}\left(\alpha+\beta+\frac{N}{2},\beta+\frac{N-M}{2};\frac{N-M}{2};-t\right) K_{-\nu}(r\sqrt{1+t}) K_{\nu'}(r) \td r\td t.
\end{multline*}
We use the integral formula~\eqref{eq:IntFormulaProductOf2KBessel} to calculate the integral over $r$ and find
\begin{multline*}
 \frac{2^{\nu+\nu'-2(\alpha+\beta)-3}\Gamma(\nu'-\alpha-\beta)\Gamma(-(\alpha+\beta))\Gamma(\nu+\nu'-\alpha-\beta)\Gamma(\nu-\alpha-\beta)}{\Gamma(\nu+\nu'-2(\alpha+\beta))}\\
 \times\int_0^\infty t^{\frac{N-M}{2}-1}{_2F_1}\left(\alpha+\beta+\frac{N}{2},\beta+\frac{N-M}{2};\frac{N-M}{2};-t\right)\\
 {_2F_1}\left(\nu'-\alpha-\beta,-(\alpha+\beta);\nu+\nu'-2(\alpha+\beta);-t\right) \td t.
\end{multline*}
Next we use the integral representation~\eqref{eq:IntRepForHypergeometric} for the second hypergeometric function and find that the integral is equal to
\begin{multline*}
 \frac{\Gamma(\nu+\nu'-2(\alpha+\beta))}{\Gamma(-(\alpha+\beta))\Gamma(\nu+\nu'-\alpha-\beta)}\int_0^\infty\int_0^1 t^{\frac{N-M}{2}-1}r^{-\alpha-\beta-1}(1-r)^{\nu+\nu'-\alpha-\beta-1}\\
 \times(1+rt)^{\alpha+\beta-\nu'}{_2F_1}\left(\alpha+\beta+\frac{N}{2},\beta+\frac{N-M}{2};\frac{N-M}{2};-t\right) \td r\td t.
\end{multline*}
Using the integral formula~\eqref{eq:IntFormulaHypergeometric0infty} we calculate the integral over $t$ and find
\begin{multline*}
 = \frac{\Gamma(\nu+\nu'-2(\alpha+\beta))}{\Gamma(-(\alpha+\beta))\Gamma(\nu+\nu'-\alpha-\beta)} \frac{\Gamma(\frac{N-M}{2})\Gamma(\nu'+\frac{M}{2})\Gamma(\nu'-\alpha)}{\Gamma(\nu'-\alpha-\beta)\Gamma(\nu'+\beta+\frac{N}{2})}\\
 \times\int_0^1 r^{-\nu'-1}(1-r)^{\nu+\nu'-\alpha-\beta-1}{_2F_1}\left(\nu'+\frac{M}{2},\nu'-\alpha;\nu'+\beta+\frac{N}{2};1-\frac{1}{r}\right) \td r.
\end{multline*}
Substituting $x=\frac{1}{r}-1$ the integral becomes
\begin{equation*}
 \int_0^\infty x^{\nu+\nu'-\alpha-\beta-1}(1+x)^{\alpha+\beta-\nu}{_2F_1}\left(\nu'+\frac{M}{2},\nu'-\alpha;\nu'+\beta+\frac{N}{2};-x\right) \td x.
\end{equation*}
Finally, using once again integral formula~\eqref{eq:IntFormulaHypergeometric0infty} gives (note that $\nu+\nu'-\alpha-\beta=\nu'+\beta+\frac{N}{2}$)
\begin{equation*}
 = \frac{\Gamma(\nu+\nu'-\alpha-\beta)\Gamma(\frac{M}{2})\Gamma(-\alpha)}{\Gamma(\nu-\alpha-\beta)\Gamma(\nu'-\alpha+\frac{M}{2})}.
\end{equation*}
Putting everything together and using $\vol(S^{k-1})=\frac{2\pi^{\frac{k}{2}}}{\Gamma(\frac{k}{2})}$ finally shows the identity claimed in Proposition~\ref{prop:SpecialValue}.
\section{Hermitian forms and positive functionals - polynomial bounds}\label{sec:PolyBounds}

In this section we prove Theorem~\ref{thm:TwoEstimates}~(2). This is done by estimating Hermitian forms. The technique we use is due to Bernstein--Reznikov~\cite{BR04}.

\subsection{Automorphic Hermitian forms}

Let $H^\aut$ denote the Hermitian form on $C^\infty(X)$ given by
\begin{equation*}
 H^\aut(f) = \int_{X'} |f|_{X'}|^2.
\end{equation*}
Further, for each $j\in\NN$ denote by $H_j^\aut$ the Hermitian form on $C^\infty(X)$ given by
\begin{equation*}
 H_j^\aut(f) = \int_{X'} |\pr_j(f|_{X'})|^2,
\end{equation*}
where $\pr_j:L^2(X')\to\overline{\calH_j}$ is the orthogonal projection onto the subspace $\overline{\calH_j}$ of conjugates of $\calH_j$ which restricts to a projection $C^\infty(X')\to\overline{V_j}$. Since the spaces $\overline{\calH_j}$ are pairwise orthogonal subspaces of $L^2(X')$ we have the following basic inequality:
\begin{equation}
 \sum_{j=1}^\infty H_j^\aut \leq H^\aut.\label{eq:HermFormsIneq1}
\end{equation}

\subsection{Hermitian forms associated to invariant bilinear forms}

We can also define Hermitian forms using invariant bilinear forms. For this let $\ell:V\times V_j\to\CC$ be a continuous invariant bilinear form. Then $\ell$ induces an intertwining operator $T_\ell:V\to\overline{V_j}$ where $\overline{V_j}$ is the space of smooth vectors in the dual representation $V_j'$ which can be identified with the complex conjugates of $V_j\subseteq C^\infty(X')$. We define a Hermitian form $H_\ell$ on $V$ by
\begin{equation*}
 H_\ell(f) = \int_{X'} |T_\ell(f)|^2.
\end{equation*}

\begin{lemma}
On $V$ the automorphic Hermitian form $H_j^\aut$ coincides with the form $H_{\ell^\aut_j}$ associated to the automorphic invariant bilinear form $\ell^\aut_j$.
\end{lemma}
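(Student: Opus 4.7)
The plan is to prove the stronger pointwise identification $T_{\ell_j^\aut}(f) = \pr_j(f|_{X'})$ in $L^2(X')$ for every $f \in V$; passing to squared $L^2(X')$-norms then yields $H_{\ell_j^\aut}(f) = H_j^\aut(f)$ immediately.

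Both sides will be exhibited as the same continuous $G'$-equivariant operator $V \to \overline{V_j}$. The map $f \mapsto \pr_j(f|_{X'})$ is $G'$-equivariant since restriction commutes with the right $G'$-action and $\pr_j$ projects onto a $G'$-stable subspace, while $T_{\ell_j^\aut}$ is $G'$-equivariant by construction. The key preparatory observation I will use is that the spherical eigenfunctions $\phi_j$ can be chosen real-valued, since they are $L^2$-eigenfunctions of a real self-adjoint operator for a real eigenvalue. Because $\calH_j$ is generated by $\phi_j$ under the right $G'$-action and right translation commutes with complex conjugation, $\calH_j$ is preserved by complex conjugation, so $\calH_j = \overline{\calH_j}$ as closed subspaces of $L^2(X')$ and $V_j = \overline{V_j}$ as subspaces of $C^\infty(X')$. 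Consequently the bilinear pairing $(u,w)\mapsto\int_{X'} u\cdot w$ on $\overline{V_j}\times\overline{V_j}$ is the restriction of the nondegenerate bilinear $L^2$-pairing to an irreducible subrepresentation and is itself nondegenerate.

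The identification then reduces to a direct calculation. For $f \in V$ and $w \in \overline{V_j}$, decompose $f|_{X'} = \pr_j(f|_{X'}) + u$ with $u \in \overline{\calH_j}^\perp$. Since $\overline{w} \in V_j = \overline{V_j} \subseteq \overline{\calH_j}$, one has $\int_{X'} u\cdot w = \langle u, \overline{w}\rangle_{L^2(X')} = 0$, and therefore
\[
\int_{X'} \pr_j(f|_{X'})\cdot w \;=\; \int_{X'} f|_{X'}\cdot w \;=\; \ell_j^\aut(f,w),
\]
which by nondegeneracy of the pairing forces $T_{\ell_j^\aut}(f) = \pr_j(f|_{X'})$.

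The main conceptual obstacle is reconciling the two different notions of ``being orthogonal to $\calH_j$'' at play: the sesquilinear $L^2$-orthogonality defining $\pr_j$, and the bilinear orthogonality used to extract $T_{\ell_j^\aut}$ from $\ell_j^\aut$. Once one notes that the real-valuedness of the spherical basis vectors gives $\overline{\calH_j} = \calH_j$, these two notions of orthogonality coincide and the lemma follows from the one-line computation above.
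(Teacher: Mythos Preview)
Your proof is correct and follows the same strategy as the paper: reduce to the pointwise identity $T_{\ell_j^\aut}f=\pr_j(f|_{X'})$ and verify it by pairing against elements of $\overline{V_j}$. The one difference is that the paper tests with the \emph{sesquilinear} $L^2$-inner product,
\[
\langle T_{\ell_j^\aut}f,g\rangle_{L^2(X')}=\ell_j^\aut(f,\overline{g})=\int_{X'}f|_{X'}\cdot\overline{g}=\langle f|_{X'},g\rangle_{L^2(X')}=\langle\pr_j(f|_{X'}),g\rangle_{L^2(X')}
\]
for $g\in\overline{V_j}$, where the last step is immediate since $\pr_j$ is the orthogonal projection onto $\overline{\calH_j}\ni g$. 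This bypasses the bilinear/sesquilinear reconciliation you flag as the ``main conceptual obstacle,'' so your observation that $\phi_j$ may be taken real and hence $\calH_j=\overline{\calH_j}$, while correct, is not actually needed.
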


\begin{proof}
We just need to check that $T_{\ell_j^\aut}f=\pr_j(f|_{X'})$. For this we calculate for $g\in\overline{V_j}$:
\begin{align*}
 \langle T_{\ell_j^\aut}f,g\rangle_{L^2(X')} &= \ell_j^\aut(f,\overline{g}) = \int_{X'} f|_{X'}\overline{g}\\
 &= \langle f|_{X'},g\rangle_{L^2(X')} = \langle\pr_j(f|_{X'}),g\rangle_{L^2(X')}.\qedhere
\end{align*}
\end{proof}

As for the automorphic Hermitian forms $H_j^\aut$ we denote by $H_j^\mod$ the Hermitian form $H_{\ell_j^\mod}$ associated to the invariant bilinear functional $\ell_j^\mod$. Then with \eqref{eq:HermFormsIneq1} we immediately obtain:

\begin{corollary}
For every $j\in\NN$ we have $H_j^\aut=|a_j|^2 H_j^\mod$ and hence the following inequality for Hermitian forms on $V$ holds:
\begin{equation}
 \sum_{j=1}^\infty |a_j|^2H_j^\mod \leq H^\aut.\label{eq:HermFormsIneq2}
\end{equation}
\end{corollary}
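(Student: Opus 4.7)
The plan is to use the preceding lemma together with the proportionality relation $\ell_j^{\aut}=a_j\ell_j^{\mod}$ established in Section~\ref{sec:AutomorphicRepresentations}, exploiting the fact that the construction $\ell\mapsto H_\ell$ is quadratic in $\ell$.

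First I would observe that for all $j$ in the relevant range (i.e.\ those for which condition \eqref{eq:ConditionUniqueness} holds, which excludes only finitely many indices), the two invariant bilinear forms $\ell_j^{\aut}$ and $\ell_j^{\mod}$ differ by a scalar, $\ell_j^{\aut}=a_j\ell_j^{\mod}$. Passing to the associated intertwining operators, this gives $T_{\ell_j^{\aut}}=a_jT_{\ell_j^{\mod}}$ as maps $V\to\overline{V_j}$. Since $H_\ell(f)=\int_{X'}|T_\ell(f)|^2$ is manifestly quadratic in the operator $T_\ell$, we immediately obtain
\begin{equation*}
 H_{\ell_j^{\aut}}(f) \;=\; |a_j|^2\,H_{\ell_j^{\mod}}(f) \;=\; |a_j|^2\,H_j^{\mod}(f), \qquad f\in V.
\end{equation*}

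Second, the preceding lemma identifies $H_{\ell_j^{\aut}}$ with the automorphic Hermitian form $H_j^{\aut}$ on $V$. Combining this identification with the display above yields the asserted equality $H_j^{\aut}=|a_j|^2 H_j^{\mod}$ on $V$. Substituting this into the basic inequality \eqref{eq:HermFormsIneq1} and restricting both sides to $V\subseteq C^\infty(X)$ immediately produces the inequality \eqref{eq:HermFormsIneq2} of Hermitian forms on $V$.

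There is essentially no analytic obstacle here; the only bookkeeping point is that the proportionality $\ell_j^{\aut}=a_j\ell_j^{\mod}$ is guaranteed only for those $j$ with \eqref{eq:ConditionUniqueness}, which as noted in Section~\ref{sec:AutomorphicRepresentations} excludes at most finitely many indices. This is harmless: one either sums only over those $j$ (absorbing the finite exceptional terms into the constant $C$ appearing later in Theorem~\ref{thm:MainThm}), or sets $a_j:=0$ by convention on the exceptional set so that the sum in \eqref{eq:HermFormsIneq2} makes literal sense. In either reading, the proof of the corollary is a one-line consequence of the lemma together with the quadratic dependence of $H_\ell$ on $\ell$.
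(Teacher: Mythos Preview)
Your proof is correct and follows exactly the approach the paper intends: the paper states the corollary as an immediate consequence of the preceding lemma together with \eqref{eq:HermFormsIneq1}, and your argument spells out precisely this reasoning (proportionality of the bilinear forms, quadratic dependence of $H_\ell$ on $\ell$, then substitution). Your remark about the finitely many exceptional indices is a welcome clarification that the paper leaves implicit.
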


\subsection{Positive functionals}

Let $\calH(V)$ denote the space of Hermitian forms on $V$ and $\calH^+(V)\subseteq\calH(V)$ the cone of non-negative forms. Following Bernstein--Reznikov~\cite{BR04} we call an additive map $\rho:\calH^+(V)\to[0,\infty]$ a \textit{positive functional}. In what follows we will need the following two properties of positive functionals which follow directly from the definition:
\begin{enumerate}[(1)]
\item (Monotonicity) If $H\leq H'$ then $\rho(H)\leq\rho(H')$,
\item (Homogeneity) For $t\geq0$ we have $\rho(tH)=t\rho(H)$.
\end{enumerate}

The group $G$ acts on $V$ and hence on $\calH(V)$. We denote this action by $\Pi$ and extend it to $C_c^\infty(G)$. If $\varphi\in C_c^\infty(G)$, $\varphi\geq0$, then $\Pi(\varphi)$ leaves the $\calH^+(V)$ invariant.

For $u\in V$ and $\varphi\in C_c^\infty(G)$ the map $\rho_{\varphi,u}$ given by
\[ \rho_{\varphi,u}(H) = (\Pi(\varphi)H)(u), \qquad H\in\calH^+(V), \]
is a positive functional.

\subsection{Construction of test functionals}

We now construct certain test functionals which we apply to the inequality~\eqref{eq:HermFormsIneq2} to obtain the desired estimates.

\begin{proposition}\label{prop:TestFunctionals}
There exist $T_0,C,c>0$ such that for every $T\geq T_0$ there exists a positive functional $\rho_T$ on $\calH^+(V)$ with
\begin{align}
 \rho_T(H^\aut) &\leq CT^{n-1},\label{eq:TestFunctionalProp1}\\
 \rho_T(H_j^\mod) &\geq c & \mbox{for }|\nu_j'|\leq T.\label{eq:TestFunctionalProp2}
\end{align}
\end{proposition}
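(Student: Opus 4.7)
The plan is to follow the Bernstein--Reznikov construction from \cite{BR04}. I construct $\rho_T$ as a convolution-type positive functional of the shape
\[
\rho_T(H) = (\Pi(\varphi_T)H)(u) = \int_G \varphi_T(g)\, H(\pi(g^{-1})u)\, \mathrm{d}g
\]
for a suitable test vector $u\in V$ and a non-negative test function $\varphi_T\in C_c^\infty(G)$ adapted to the scale $T$. The natural first choice for $u$ is the spherical vector $\psi_\nu$ (corresponding to the automorphic form $\phi$), possibly replaced by a bump function in the non-compact model of $J(\nu)$ if this is required to obtain the lower bound in the final step.

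The first reduction exploits $G'$-invariance: the measure on $X'=\Gamma'\backslash G'$ is $G'$-invariant, and each $T_{\ell_j^\mod}\colon V\to \overline{V_j}$ is $G'$-equivariant, so the two functions $g\mapsto H^\aut(\pi(g^{-1})u)$ and $g\mapsto H_j^\mod(\pi(g^{-1})u)$ are both right $G'$-invariant and descend to functions on $G'\backslash G$. Using the decomposition $\frakg=\frakg'\oplus\frakq$ associated to the involution $\sigma$ of Section~\ref{sec:GeodesicCycles}, I parametrize a transverse slice to $G'$ in $G$ by a neighbourhood of $0$ in $\frakq$. The test function $\varphi_T$ is then taken supported in a small neighbourhood of identity in this transverse slice, concentrated on a set of volume $\sim T^{-(n-1)}$, and normalised so that $\|\varphi_T\|_{L^1(G)}=O(1)$ and $\|\varphi_T\|_{L^\infty}\sim T^{n-1}$.

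The upper bound $\rho_T(H^\aut)\leq CT^{n-1}$ should then follow from a Sobolev trace estimate on the codimension-$(n-m)$ embedding $X'\subset X$, combined with uniform control on the smooth vector $\pi(g^{-1})\phi$ for $g$ in a fixed compact transverse neighbourhood, multiplied by the $L^\infty$-mass of $\varphi_T$. For the lower bound $\rho_T(H_j^\mod)\geq c$, the key input is the explicit formula for the matrix coefficient $\|T_{\ell_j^\mod}(\pi(g^{-1})\psi_\nu)\|^2$, which one can derive from the Bessel and hypergeometric integrals of Section~\ref{sec:SpecialValue}: for $g$ in the transverse neighbourhood of identity of scale $T^{-1}$, the translated spherical vector should retain enough $G'$-Fourier mass at the $\nu_j'$-component to make this matrix coefficient uniformly positive for all $|\nu_j'|\leq T$.

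The main obstacle is precisely this last step: the spherical matrix coefficient $\|T_{\ell_j^\mod}(\psi_\nu)\|^2$ itself decays exponentially, as $e^{-\pi|\nu_j'|}$ by Corollary~\ref{cor:EstimateSpecialValue}, and this exponential loss must be fully compensated by the transverse translation built into $\varphi_T$. Achieving a lower bound uniform in $j$ up to $|\nu_j'|\leq T$ will require a careful choice of $\varphi_T$ whose transverse scale matches the transition region of the $K$-Bessel and ${}_2F_1$ kernels appearing in $\ell_j^\mod$, so that cancellation in the defining oscillatory integral is avoided for all $j$ in the relevant range.
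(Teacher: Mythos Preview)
Your diagnosis of the main obstacle is correct, but the proposed cure does not work, and this is a genuine gap rather than a technicality. With $u=\psi_\nu$ and $\varphi_T$ concentrated near the identity, the quantity $H_j^\mod(\pi(g^{-1})\psi_\nu)=\|T_{\ell_j^\mod}\pi(g^{-1})\psi_\nu\|_{\nu_j'}^2$ is, for $g$ within distance $O(T^{-1})$ of $e$, uniformly close to $\|T_{\ell_j^\mod}\psi_\nu\|_{\nu_j'}^2$; the latter is exactly $|\ell_j^\mod(\psi_\nu,\psi'_{\nu_j'})|^2$ (since $T_{\ell_j^\mod}\psi_\nu$ is the $K'$-spherical vector up to scalar), which decays like $e^{-\pi|\nu_j'|}$ by Corollary~\ref{cor:EstimateSpecialValue}. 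Multiplying by $\|\varphi_T\|_{L^\infty}\sim T^{n-1}$ only recovers a polynomial factor, so $\rho_T(H_j^\mod)$ would still be exponentially small in $|\nu_j'|$ and \eqref{eq:TestFunctionalProp2} cannot hold. The point is that small transverse translations of a globally smooth vector do not create high-frequency $G'$-content; only a vector that is already localised in the non-compact model can do that.

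The paper's construction reverses the roles you chose: $\varphi$ is \emph{fixed} (independent of $T$, supported in a small $K'$-invariant neighbourhood of $e$), while the test vector is a $T$-dependent bump $u_T\in C_c^\infty(\RR^{n-1})\subseteq J(\nu)$ of the form $u_T(x)=T^{n}u_1(T(x-x_0)+x_0)$, supported in $B_{T^{-1}}(x_0)$ with $x_0=(x_0',x_0'')$ chosen so that $|x_0''|$ is bounded away from $0$. The upper bound \eqref{eq:TestFunctionalProp1} comes simply from $\rho_T(H^\aut)\leq B\|u_T\|_\nu^2\leq CT^{n-1}$, since the smooth measure $f\mapsto\int_G\int_{X'}\varphi(g)f(g\cdot x)\,\mathrm{d}\mu_{X'}\,\mathrm{d}g$ is dominated by $\mathrm{d}\mu_X$; no Sobolev trace argument is needed. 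For the lower bound one computes $T_{\ell_j^\mod}(\pi(g^{-1})u_T)(0)$ directly from \eqref{eq:ModelFormInFlatPicture}: on $\supp u_T$ the integrand $|x|^{\nu_j'-\rho'}|x''|^{(\nu-\nu_j'-\rho+\rho')/2}j(g,x)^{\nu-\rho}$ has modulus bounded below (here $\nu_j'\in i\RR$, so $|\,|x|^{\nu_j'}|=1$, and $|x''|\geq|x_0''|/4>0$), while its gradient is $O(|\nu_j'|)$. Hence on a ball of radius $T^{-1}$ with $|\nu_j'|\leq T$ the phase varies by at most a fixed constant, and a mean-value argument (Fact~\ref{fct:IntLowerBd}) gives the uniform lower bound. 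The essential idea you were missing is that the exponential decay is a feature of the \emph{spherical} vector, and is avoided entirely by testing against a bump localised away from the locus $\{x''=0\}$.
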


\begin{proof}

We construct a positive functional $\rho_T$ of the form $\rho_T:=\rho_{\varphi,u}$ for certain $\varphi\in C_c^\infty(G)$ and $u\in V$. Here we find $\varphi$ and $u$ in several steps:
\begin{itemize}
\item Choose a point $x_0=(x_0',x_0'')\in\RR^{m-1}\times\RR^{n-m}=\RR^{n-1}$ with
\begin{equation}
 |x_0''| \geq 56.\label{eq:ConstructionProp1}
\end{equation}
\item Since by \eqref{eq:NormConformal} we have $|g\cdot x-g\cdot y|^2=j(g,x)|x-y|^2j(g,y)$ with $j(g,x)$ being smooth in $g$ and $x$, $j(e,x)=1$, there exists a symmetric subset $D_0\subseteq G$ (i.e. $D_0^{-1}=D_0$), which is a relatively compact open neighborhood of the identity, such that every conformal transformation $g\in D_0$ is defined at every $x\in B_{|x_0''|}(x_0)=\{x\in\RR^{n-1}:|x-x_0|<|x_0''|\}$ and we have
\begin{align}
 |g\cdot x_0-x_0| &< |x_0''|/4,\label{eq:ConstructionProp5}\\
 \tfrac{1}{2}|x-y| &\leq |g\cdot x-g\cdot y| \leq 2|x-y| && \mbox{for }x,y\in B_{|x_0''|}(x_0),\label{eq:ConstructionProp4}\\
 \frac{|\nabla_xj(g,x)|}{|j(g,x)|} &\leq \frac{1}{4|\nu-\rho|\cdot|x_0''|} && \mbox{for }x\in B_{|x_0''|}(x_0).\label{eq:ConstructionProp6}
\end{align}
\item Put $D:=D_0K'$ and choose a $K'$-right invariant function $\varphi\in C_c^\infty(G)$ with $\varphi\equiv1$ on $D$. Obviously $\varphi$ is independent of $T$.
\item Next let $T_0>0$ such that
\begin{align}
 & T_0 \geq 4/|x_0''|, \label{eq:ConstructionProp2}\\
 & 6T+4\rho'+2|\nu-\rho+\rho'|+\tfrac{1}{4} \leq 7T & \mbox{for }T\geq T_0.\label{eq:ConstructionProp3}
\end{align}
\item Finally we choose a non-negative function $u_1\in C_c^\infty(\RR^{n-1})\subseteq J(\nu)$ with $\supp u_1\subseteq B_1(x_0)$ and $\int_{\RR^{n-1}}u_1=1$ and put $A:=\|u_1\|_\nu^2>0$. Then the family $(u_T)_{T\geq1}\subseteq C_c^\infty(\RR^{n-1})\subseteq J(\nu)$ given by $u_T(x)=T^nu(Tx)$ has the following properties:
\begin{align}
 & \supp u_T\subseteq B_{T^{-1}}(x_0),\label{eq:ConstructionProp7}\\
 & \int_{\RR^{n-1}}u_T = 1,\label{eq:ConstructionProp8}\\
 & \|u_T\|_\nu^2 = AT^{2(\rho-\Re\nu)} \leq AT^{n-1}.\label{eq:ConstructionProp9}
\end{align}
Here we have used \eqref{eq:InvariantNormFlatPicture} and \eqref{eq:InvariantNormFlatPicture2} for the last property. Note that by \eqref{eq:ConstructionProp2} for $T\geq T_0$ we have $\supp u_T\subseteq B_{|x_0''|/4}(x_0)$.
\end{itemize}

We first prove property~\eqref{eq:TestFunctionalProp1} for the functional $\rho_T=\rho_{\varphi,u}$ with $u=u_T$. We have
\begin{align*}
 \rho_T(H^\aut) &= \int_G \varphi(g)(\Pi(g)H^\aut)(u)\td g = \int_G \varphi(g)H^\aut(\pi(g^{-1})u)\td g\\
 &= \int_G \int_{X'} \varphi(g)|u(g\cdot x)|^2 \td\mu_{X'}(x)\td g,
\end{align*}
where $\td\mu_{X'}$ denotes the Riemannian measure on $X'$. Since $G\cdot X'=X$ the map 
\begin{equation*}
 C_c(X)\to\CC,\,f\mapsto\int_G \int_{X'} \varphi(g)f(g\cdot x) \td\mu_{X'}(x)\td g
\end{equation*}
defines a smooth finite measure on $X$. Hence this measure must be bounded by a constant $B>0$ times the $G$-invariant measure $\td\mu_X$, the constant only depending on $\varphi$ which in turn does not depend on $T$ or $T_0$ but only on the choice of $x_0$ and $D_0$. It follows that
\begin{equation*}
 \rho_T(H^\aut) \leq B\int_X |u|^2 \td\mu_X = B\|u\|_\nu^2 \leq CT^{n-1}
\end{equation*}
by \eqref{eq:ConstructionProp9} with $C=AB$ which proves \eqref{eq:TestFunctionalProp1}.

Now let us calculate $\rho_T(H_j^\mod)$. We use the fact that for the spherical principal series $J'(\nu_j')$ the invariant norm is given by (see \eqref{eq:InvariantNormInducedPicture})
\begin{equation*}
 \|f\|_{\nu_j'}^2 =\int_{K'} |(\pi'_{\nu_j'}(k')f)(0)|^2 \td k'.
\end{equation*}
Using the intertwining property of $T_{\ell_j^\mod}$ and the $K'$-right invariance of $\varphi$ we obtain
\begin{align*}
 \rho_T(H_j^\mod) &= \int_G \varphi(g)(\Pi(g)H_j^\mod)(u) \td g = \int_G \varphi(g)H_j^\mod(\pi(g^{-1})u) \td g\\
 &= \int_G \varphi(g) \|T_{\ell_j^\mod}\pi(g^{-1})u\|^2_{\nu_j'} \td g\\
 &= \int_G \int_{K'} \varphi(g) |(\pi'_{\nu_j'}(k')T_{\ell_j^\mod}\pi(g^{-1})u)(0)|^2 \td k' \td g\\
 &= \int_G \int_{K'} \varphi(g) |(T_{\ell_j^\mod}\pi(k'g^{-1})u)(0)|^2 \td k' \td g\\
 &= \int_G \varphi(g) |(T_{\ell_j^\mod}\pi(g^{-1})u)(0)|^2 \td g.
\end{align*}
Since $\varphi\equiv1$ on $D_0$ and $\varphi\geq0$ it suffices to show that $|(T_{\ell_j^\mod}\pi(g^{-1})u)(0)|^2$ is on $D_0$ bounded below by a universal constant for $|\nu_j'|\leq T$.

In the flat picture the intertwiner $T_{\ell_j^\mod}$ takes by \eqref{eq:ModelFormInFlatPicture} the form
\begin{equation*}
 T_{\ell_j^\mod}f(0) = C_{m,n}\cdot\int_{\RR^{n-1}} |x|^{\nu_j'-\rho'}|x''|^{\frac{\nu-\nu_j'-(\rho-\rho')}{2}}f(x)\td x,
\end{equation*}
the constant $C_{m,n}>0$ only depending on the dimensions $m$ and $n$. The action $\pi=\pi_\nu$ is in the flat picture given by \eqref{eq:ActionFlatPicture} and hence
\begin{equation*}
 |(T_{\ell_j^\mod}\pi(g^{-1})u)(0)| = C_{m,n}\cdot\left|\int_{\RR^{n-1}} |x|^{\nu_j'-\rho'}|x''|^{\frac{\nu-\nu_j'-(\rho-\rho')}{2}}j(g,x)^{\nu+\rho}u(g\cdot x) \td x\right|.
\end{equation*}
First note that $g\cdot x\in\supp u\subseteq B_{T^{-1}}(x_0)$ implies $x\in g^{-1}\cdot B_{T^{-1}}(x_0)\subseteq B_{2T^{-1}}(g^{-1}x_0)\subseteq B_{3|x_0''|/4}(x_0)$ by \eqref{eq:ConstructionProp5}, \eqref{eq:ConstructionProp4} and \eqref{eq:ConstructionProp2}, so the integral equals
\begin{equation*}
 \int_{B_{2T^{-1}}(g^{-1}x_0)} |x|^{\nu_j'-\rho'}|x''|^{\frac{\nu-\nu_j'-(\rho-\rho')}{2}}j(g,x)^{\nu+\rho}u(g\cdot x) \td x.
\end{equation*}
To find a universal lower bound for this expression we use the following fact (cf. \cite[Section 5.3]{BR04}):

\begin{fact}\label{fct:IntLowerBd}
Let $(X,\mu)$ be a measure space and $f,\phi:X\to\CC$ two measurable functions. Assume $\phi\geq0$ with $\int_X\phi\td\mu=1$ and $\sup_{x,y\in X}|f(x)-f(y)|\leq\frac{1}{2}\sup_{x\in X}|f(x)|$. Then
\begin{equation*}
 \left|\int_Xf\phi\td\mu\right| \geq \frac{1}{2}\sup_{x\in X}|f(x)|.
\end{equation*}
\end{fact}

We use this fact with $f_g(x):=|x|^{\nu_j'-\rho'}|x''|^{\frac{\nu-\nu_j'-(\rho-\rho')}{2}}j(g,x)^{\nu-\rho}$ and $\phi_g(x)=j(g,x)^{2\rho}u(g\cdot x)$. Clearly $\phi_g\geq0$ and by \eqref{eq:TrafoFormulaFlatPicture} and \eqref{eq:ConstructionProp8} we have $\int_{\RR^{n-1}}\phi_g(x)\td x=1$. To prove the assumption on $f_g$ in Fact~\ref{fct:IntLowerBd} we use the mean value theorem. For the gradient of $f_g(x)$ we find
\begin{equation*}
 \nabla f_g(x) = \left[(\nu_j'-\rho')\frac{x}{|x|^2}+\frac{\nu-\nu_j'-(\rho-\rho')}{2}\frac{x''}{|x''|^2}+(\nu-\rho)\frac{\nabla_xj(g,x)}{j(g,x)}\right]f_g(x).
\end{equation*}
Using \eqref{eq:ConstructionProp6} we can estimate the gradient by
\begin{multline*}
 \sup_{x\in B_{2T^{-1}}(g^{-1}x_0)}|\nabla f_g(x)| \leq \sup_{x\in B_{2T^{-1}}(g^{-1}x_0)}\left[\frac{1}{|x''|}\left(\tfrac{3}{2}|\nu_j'|+\rho'+|\tfrac{\nu-(\rho-\rho')}{2}|\right)+\frac{1}{4|x_0''|}\right]\\
 \times\sup_{x\in B_{2T^{-1}}(g^{-1}x_0)}|f_g(x)|
\end{multline*}
Using \eqref{eq:ConstructionProp5} and \eqref{eq:ConstructionProp2} we find that $x\in B_{2T^{-1}}(g^{-1}x_0)$ implies $|x''|\geq|x_0''|/4$ and hence
\begin{multline*}
 \sup_{x\in B_{2T^{-1}}(g^{-1}x_0)}\left[\frac{1}{|x''|}\left(\tfrac{3}{2}|\nu_j'|+\rho'+|\tfrac{\nu-(\rho-\rho')}{2}|\right)+\frac{1}{4|x_0''|}\right]\\
 \leq \frac{1}{|x_0''|}\left(6|\nu_j'|+4\rho'+2|\nu-(\rho-\rho')|+\tfrac{1}{4}\right).
\end{multline*}
For $|\nu_j'|\leq T$ the last term can by \eqref{eq:ConstructionProp1} and \eqref{eq:ConstructionProp3} be estimated by $T/8$. Hence we obtain for $g\in D_0$
\begin{align*}
 \sup_{x,y\in B_{2T^{-1}}(g^{-1}x_0)}|f_g(x)-f_g(y)| &\leq 4T^{-1}\cdot\sup_{x\in B_{2T^{-1}}(g^{-1}x_0)} |\nabla f_g(x)|\\
 &\leq \frac{1}{2}\sup_{x\in B_{2T^{-1}}(g^{-1}x_0)}|f_g(x)|
\end{align*}
Therefore $f_g$ satisfies the assumptions in Fact~\ref{fct:IntLowerBd} and we obtain
\begin{equation*}
 \rho_T(H_j^\mod) \geq \frac{1}{2}C_{m,n}^2\vol(D_0)\left(\sup_{x\in B_{2T^{-1}}(g^{-1}x_0)}|f_g(x)|\right)^2.
\end{equation*}
By \eqref{eq:ConstructionProp5} and \eqref{eq:ConstructionProp2} we have $B_{2T^{-1}}(g^{-1}x_0)\subseteq B_{3|x_0''|/4}(x_0)$. But for $x\in B_{3|x_0''|/4}(x_0)$ we have the following lower bound:
\begin{equation*}
 |f_g(x)| \geq \sup_{x\in B_{3|x_0''|/4}(x_0),\,g\in D_0}|x|^{-2\rho'}|x''|^{\frac{\Re\nu-\rho+\rho'}{2}}j(g,x)^{\Re\nu-\rho} > 0
\end{equation*}
since $B_{3|x_0''|/4}(x_0)$ and $D_0$ are relatively compact. This bound only depends on the choice of $x_0$ and $D_0$ and not on $T$ or $T_0$. Therefore the proof is complete.
\end{proof}

\subsection{Proof of Theorem~\ref{thm:TwoEstimates}~(2)}

We apply the test functionals $\rho_T$ from Proposition~\ref{prop:TestFunctionals} to the inequality~\eqref{eq:HermFormsIneq2}. Hence there exists $T_0,C,c>0$ such that for every $T\geq T_0$ we have
\begin{equation*}
 c\sum_{|\nu_j'|\leq T}{|a_j|^2} \leq CT^{n-1}.
\end{equation*}
The parameter $\nu_j'$ is related to the eigenvalue $\lambda_j$ by \eqref{eq:RelationLambdaNu} and hence this estimate implies Theorem~\ref{thm:TwoEstimates}~(2).

\appendix

\section{Integral formulas for special functions}\label{sec:IntFormulasSpecialFcts}

\subsection{Bessel functions}

Let $J_\alpha(z)$ and $K_\alpha(z)$ denote the classical $J$- and $K$-Bessel functions. The following integral formulas hold:

For $-1<\Re\nu<\Re(2\mu+\frac{3}{2})$, $a,b>0$ we have (see \cite[formula~6.565~(4)]{GR07})
\begin{equation}
 \int_0^\infty \frac{J_\nu(bx)x^{\nu+1}}{(x^2+a^2)^{\mu+1}}\td x = \frac{a^{\nu-\mu}b^\mu}{2^\mu\Gamma(\mu+1)}K_{\nu-\mu}(ab).\label{eq:HankelTrafoPartialNorm}
\end{equation}

For $\Re(a\pm ib)>0$, $\Re(\nu-\lambda+1)>|\Re\mu|$ we have (see \cite[formula~6.576~(3)]{GR07})
\begin{multline}
 \int_0^\infty x^{-\lambda}K_\mu(ax)J_\nu(bx) \td x = \frac{b^\nu\Gamma(\frac{\nu-\lambda+\mu+1}{2})\Gamma(\frac{\nu-\lambda-\mu+1}{2})}{2^{\lambda+1}a^{\nu-\lambda+1}\Gamma(\nu+1)}\\
 \times{_2F_1}\left(\frac{\nu-\lambda+\mu+1}{2},\frac{\nu-\lambda-\mu+1}{2};\nu+1;-\frac{b^2}{a^2}\right).\label{eq:HankelTrafoKBessel}
\end{multline}

For $\Re\sigma>|\Re\mu|+|\Re\nu|$, $\Re(a+b)>0$ we have (see \cite[formula 10.3~(49)]{EMOT54})
\begin{multline}
 \int_0^\infty K_\mu(ax)K_\nu(bx)x^{\sigma-1}\td x = \frac{2^{\sigma-3}b^\nu\Gamma(\frac{\sigma+\mu+\nu}{2})\Gamma(\frac{\sigma-\mu+\nu}{2})\Gamma(\frac{\sigma+\mu-\nu}{2})\Gamma(\frac{\sigma-\mu-\nu}{2})}{a^{\nu+\sigma}\Gamma(\sigma)}\\
 {_2F_1}\left(\frac{\sigma+\mu+\nu}{2},\frac{\sigma-\mu+\nu}{2};\sigma;1-\frac{b^2}{a^2}\right).\label{eq:IntFormulaProductOf2KBessel}
\end{multline}

\subsection{Hypergeometric function}

Further, let ${_2F_1}(a,b;c;z)$ denote the classical hypergeometric function. Then the following integral formulas hold:

For $\Re c>\Re b>0$, $x\notin(1,\infty)$ we have (see \cite[Theorem 2.2.1]{AAR99})
\begin{equation}
 {_2F_1}(a,b;c;x) = \frac{\Gamma(c)}{\Gamma(b)\Gamma(c-b)}\int_0^1t^{b-1}(1-t)^{c-b-1}(1-xt)^{-a}\td t.\label{eq:IntRepForHypergeometric}
\end{equation}

For $\Re\gamma>0$, $\Re(\alpha-\gamma+\sigma)>0$, $\Re(\beta-\gamma+\sigma)>0$, $\arg(z)<\pi$ we have (see \cite[equation 7.512~(10)]{GR07})
\begin{multline}
 \int_0^\infty x^{\gamma-1}(x+z)^{-\sigma}{_2F_1}(\alpha,\beta;\gamma;-x)\td x = \frac{\Gamma(\gamma)\Gamma(\alpha-\gamma+\sigma)\Gamma(\beta-\gamma+\sigma)}{\Gamma(\sigma)\Gamma(\alpha+\beta-\gamma+\sigma)}\\
 {_2F_1}(\alpha-\gamma+\sigma,\beta-\gamma+\sigma;\alpha+\beta-\gamma+\sigma;1-z).\label{eq:IntFormulaHypergeometric0infty}
\end{multline}

%

\bibliographystyle{amsplain}
\bibliography{bibdb}

\def\cprime{$'$}
\providecommand{\bysame}{\leavevmode\hbox to3em{\hrulefill}\thinspace}
\providecommand{\MR}{\relax\ifhmode\unskip\space\fi MR }
\providecommand{\MRhref}[2]{%
  \href{http://www.ams.org/mathscinet-getitem?mr=#1}{#2}
}
\providecommand{\href}[2]{#2}
\begin{thebibliography}{10}

\bibitem{AGRS10}
A.~Aizenbud, D.~Gourevitch, S.~Rallis, and G.~Schiffmann, \emph{Multiplicity
  one theorems}, Ann. of Math. (2) \textbf{172} (2010), no.~2, 1407--1434.

\bibitem{AAR99}
G.~E. Andrews, R.~Askey, and R.~Roy, \emph{Special functions}, Encyclopedia of
  Mathematics and its Applications, vol.~71, Cambridge University Press,
  Cambridge, 1999.

\bibitem{Ber99}
N.~Bergeron, \emph{Sur l'homologie de cycles g\'eod\'esiques dans des
  vari\'et\'es hyperboliques compactes}, C. R. Acad. Sci. Paris S\'er. I Math.
  \textbf{328} (1999), no.~9, 783--788.

\bibitem{Ber06}
\bysame, \emph{Propri\'et\'es de {L}efschetz automorphes pour les groupes
  unitaires et orthogonaux}, M\'em. Soc. Math. Fr. (N.S.) (2006), no.~106.

\bibitem{BR04}
J.~Bernstein and A.~Reznikov, \emph{Estimates of automorphic functions}, Mosc.
  Math. J. \textbf{4} (2004), no.~1, 19--37, 310.

\bibitem{BR10}
\bysame, \emph{Subconvexity bounds for triple {$L$}-functions and
  representation theory}, Ann. of Math. (2) \textbf{172} (2010), no.~3,
  1679--1718.

\bibitem{BHC62}
A.~Borel and Harish-Chandra, \emph{Arithmetic subgroups of algebraic groups},
  Ann. of Math. (2) \textbf{75} (1962), 485--535.

\bibitem{BGT07}
N.~Burq, P.~G{\'e}rard, and N.~Tzvetkov, \emph{Restrictions of the
  {L}aplace--{B}eltrami eigenfunctions to submanifolds}, Duke Math. J.
  \textbf{138} (2007), no.~3, 445--486.

\bibitem{CKOP11}
J.-L. Clerc, T.~Kobayashi, B.~{\O}rsted, and M.~Pevzner, \emph{Generalized
  {B}ernstein--{R}eznikov integrals}, Math. Ann. \textbf{349} (2011), no.~2,
  395--431.

\bibitem{Dei06}
A.~Deitmar, \emph{Invariant triple products}, Int. J. Math. Math. Sci. (2006),
  Art. ID 48274, 22.

\bibitem{EMOT54}
A.~Erd{\'e}lyi, W.~Magnus, F.~Oberhettinger, and F.~G. Tricomi, \emph{Tables of
  integral transforms. {V}ol. {II}}, McGraw-Hill Book Company, Inc., New York,
  1954.

\bibitem{GGPS69}
I.~M. Gel{\cprime}fand, M.~I. Graev, and I.~I. Pyatetskii-Shapiro,
  \emph{Representation theory and automorphic functions}, Translated from the
  Russian by K. A. Hirsch, W. B. Saunders Co., Philadelphia, Pa., 1969.

\bibitem{GR07}
I.~S. Gradshteyn and I.~M. Ryzhik, \emph{Table of integrals, series, and
  products}, seventh ed., Elsevier/Academic Press, Amsterdam, 2007.

\bibitem{GP92}
B.~H. Gross and D.~Prasad, \emph{On the decomposition of a representation of
  {${\rm SO}_n$} when restricted to {${\rm SO}_{n-1}$}}, Canad. J. Math.
  \textbf{44} (1992), no.~5, 974--1002.

\bibitem{II10}
A.~Ichino and T.~Ikeda, \emph{On the periods of automorphic forms on special
  orthogonal groups and the {G}ross--{P}rasad conjecture}, Geom. Funct. Anal.
  \textbf{19} (2010), no.~5, 1378--1425.

\bibitem{Kna86}
A.~W. Knapp, \emph{Representation theory of semisimple groups}, Princeton
  Mathematical Series, vol.~36, Princeton University Press, Princeton, NJ,
  1986, An overview based on examples.

\bibitem{Kob13}
T.~Kobayashi, \emph{{$F$}-method for symmetry breaking operators},  (2013), to
  appear in Differential Geom.\ Appl., available at
  \href{http://dx.doi.org/10.1016/j.difgeo.2013.10.003}{DOI:10.1016/j.difgeo.2%
013.10.003}.

\bibitem{KO98}
T.~Kobayashi and T.~Oda, \emph{A vanishing theorem for modular symbols on
  locally symmetric spaces}, Comment. Math. Helv. \textbf{73} (1998), no.~1,
  45--70.

\bibitem{KS}
T.~Kobayashi and B.~Speh, \emph{Symmetry breaking for representations of rank
  one orthogonal groups},  (2013), preprint, available at
  \href{http://arxiv.org/abs/1310.3213}{arXiv:1310.3213}.

\bibitem{KM82}
S.~S. Kudla and J.~J. Millson, \emph{Geodesic cyclics and the {W}eil
  representation. {I}. {Q}uotients of hyperbolic space and {S}iegel modular
  forms}, Compositio Math. \textbf{45} (1982), no.~2, 207--271.

\bibitem{Mil76}
J.~J. Millson, \emph{On the first {B}etti number of a constant negatively
  curved manifold}, Ann. of Math. (2) \textbf{104} (1976), no.~2, 235--247.

\bibitem{MR81}
J.~J. Millson and M.~S. Raghunathan, \emph{Geometric construction of cohomology
  for arithmetic groups. {I}}, Proc. Indian Acad. Sci. Math. Sci. \textbf{90}
  (1981), no.~2, 103--123.

\bibitem{MO12}
J.~M\"{o}llers and Y.~Oshima, \emph{Restriction of complementary series
  representations of {$O(1,N)$} to symmetric subgroups},  (2012), preprint,
  available at \href{http://arxiv.org/abs/1209.2312}{arXiv:1209.2312}.

\bibitem{MOO}
J.~M\"{o}llers, Y.~Oshima, and B.~{\O}rsted, \emph{{K}napp--{S}tein type
  intertwining operators for symmetric pairs},  (2013), preprint, available at
  \href{http://arxiv.org/abs/1309.3904}{arXiv:1309.3904}.

\bibitem{MT62}
G.~D. Mostow and T.~Tamagawa, \emph{On the compactness of arithmetically
  defined homogeneous spaces}, Ann. of Math. (2) \textbf{76} (1962), 446--463.

\bibitem{Rez04}
A.~Reznikov, \emph{Estimates of geodesic restrictions of eigenfunctions on
  hyperbolic surfaces and representation theory},  (2004), preprint, available
  at \href{http://arxiv.org/abs/math/0403437}{arXiv:math/0403437}.

\bibitem{Rez08}
\bysame, \emph{Rankin--{S}elberg without unfolding and bounds for spherical
  {F}ourier coefficients of {M}aass forms}, J. Amer. Math. Soc. \textbf{21}
  (2008), no.~2, 439--477.

\bibitem{Rez13}
\bysame, \emph{A uniform bound for geodesic periods of eigenfunctions on
  hyperbolic surfaces},  (2013), to appear in Forum Math.

\bibitem{Sch10}
J.~Schwermer, \emph{Geometric cycles, arithmetic groups and their cohomology},
  Bull. Amer. Math. Soc. (N.S.) \textbf{47} (2010), no.~2, 187--279.

\bibitem{SW71}
E.~M. Stein and G.~Weiss, \emph{Introduction to {F}ourier analysis on
  {E}uclidean spaces}, Princeton University Press, Princeton, N.J., 1971,
  Princeton Mathematical Series, No. 32.

\bibitem{SZ12}
B.~Sun and C.-B. Zhu, \emph{Multiplicity one theorems: the {A}rchimedean case},
  Ann. of Math. (2) \textbf{175} (2012), no.~1, 23--44.

\bibitem{Wat02}
T.~C. Watson, \emph{Rankin triple products and quantum chaos}, ProQuest LLC,
  Ann Arbor, MI, 2002, Thesis (Ph.D.)--Princeton University.

\end{thebibliography}

\end{document}